\newtheorem{theorem}{Theorem}[section]
\newtheorem{lemma}[theorem]{Lemma}
\newtheorem{corollary}[theorem]{Corollary}
\newtheorem{proposition}[theorem]{Proposition}
\theoremstyle{definition}
\newtheorem{definition}[theorem]{Definition}
\newtheorem{example}[theorem]{Example}
\theoremstyle{remark}
\definecolor{fxnote}{rgb}{1.0000,0.0000,0.0000}
\colorlet{fxnotebg}{yellow}
\renewcommand*\FXLayoutInline[3]{%
  \@fxdocolon {#3}{%
    \@fxuseface {inline}%
    \colorbox{fx#1bg}{\color {fx#1}\ignorespaces #3\@fxcolon #2}}}
\DeclareMathOperator{\Ext}{Ext}
\newcommand{\Dcal}{\ensuremath{\mathcal{D}}}
\newcommand{\Xcal}{\ensuremath{\mathcal{X}}}
\newcommand{\Tcal}{\ensuremath{\mathcal{T}}}
\newcommand{\Fcal}{\ensuremath{\mathcal{F}}}
\newcommand{\Ccal}{\ensuremath{\mathcal{C}}}
\newcommand{\Wcal}{\ensuremath{\mathcal{W}}}
\newcommand{\Kbb}{\mathbb{K}}
\newcommand{\ra}{\rightarrow}
\newcommand{\tr}[2]{t_{#1}(#2)}
\numberwithin{equation}{section}
\begin{document}
\title{Torsion classes, wide subcategories and localisations}
\author{Frederik Marks, Jan {\v S}{\v{t}}ov{\'{\i}}{\v{c}}ek}

\address{Frederik Marks, University of Stuttgart, Institute for Algebra and Number Theory, Pfaffenwaldring 57, 70569 Stuttgart, Germany}
\email{marks@mathematik.uni-stuttgart.de}

\address{Jan {\v S}{\v{t}}ov{\'{\i}}{\v{c}}ek, Charles University in Prague, Faculty of Mathematics and Physics, Department of Algebra, Sokolovsk\'a 83, 186 75 Praha, Czech Republic}
\email{stovicek@karlin.mff.cuni.cz}

\subjclass[2010]{16G20, 16S85, 18E40}
\keywords{Finite dimensional algebra, torsion class, wide subcategory, universal localisation}
\thanks{The second named author was supported by grant GA~\v{C}R 14-15479S from the Czech Science Foundation.}

\begin{abstract}
For a finite dimensional algebra $A$, we establish correspondences between torsion classes and wide subcategories in $mod(A)$. In case $A$ is representation finite, we obtain an explicit bijection between these two classes of subcategories. Moreover, we translate our results to the language of ring epimorphisms and universal localisations. It turns out that universal localisations over representation finite algebras are classified by torsion classes and support $\tau$-tilting modules.
\end{abstract}
\maketitle

\section{Introduction}
Torsion classes and wide (i.e abelian and extension closed) subcategories of a given module category play an important role in the representation theory of rings and algebras. 
In this article, we show that these two classes of subcategories are intrinsically connected. If $A$ is a finite dimensional algebra, we obtain an injection from wide subcategories to torsion classes in $mod(A)$. When restricting to functorially finite subcategories this map can be turned around yielding an injection from functorially finite torsion classes to functorially finite wide subcategories. Consequently, over representation finite algebras torsion classes are in bijection with wide subcategories.

We use this bijection to classify all universal localisations in the sense of \cite{Sch}. These localisations turn out to correspond to torsion classes and also to support $\tau$-tilting modules recently introduced in \cite{AIR}. This connection not only allows us to readily learn basic parameters of the corresponding localisations (such as the number of simple modules over the localised ring), but also opens the possibility to study the still somewhat mysterious universal localisations using advanced tools of representation theory. For instance, \cite{AIR} introduces a well behaved mutation procedure for support $\tau$-tilting modules and it might be interesting to understand its counterpart for universal localisations.

\smallskip

Looking back at the concepts involved, the notion of a torsion class in an abelian category goes back to Dickson (\cite{D}). These classes occur as the left-hand-side of a pair of subcategories $(\Tcal,\Fcal)$ where $\Tcal$ and $\Fcal$ are chosen maximally with respect to the property that $Hom(T,F)=0$ for all $T\in\Tcal$ and $F\in\Fcal$. When passing to module categories, torsion classes are studied, for example, in tilting theory (see \cite{AHK}). In fact, the subcategory of all modules generated by a tilting module forms a torsion class. Moreover, torsion classes are relevant to decide if a given subcategory of a module category has almost split sequences (\cite{AS2}). Building on this work, it was shown in \cite{AIR} that functorially finite torsion classes in $mod(A)$ are parametrised by support $\tau$-tilting modules.

Wide subcategories of a given module category are full abelian subcategories that are closed under extensions. In \cite{H}, Hovey classified the wide subcategories of finitely presented modules over some commutative rings by certain subsets of the spectrum of the ring. This classification can be understood as a module theoretical interpretation of the correspondence between thick subcategories of perfect complexes in the derived category and certain unions of closed sets of the spectrum (see \cite{Hop,N,T}).  
Passing to non-commutative rings, it was shown in \cite{IT} that over a finite dimensional hereditary algebra $A$, the functorially finite wide subcategories of $mod(A)$ are in bijection with the functorially finite torsion classes of the same category.

Universal localisations, as introduced by Cohn and Schofield (see \cite{Sch}), generalise classical localisation theory to possibly non-commutative rings. Instead of inverting a given set of elements, one localises with respect to a set of maps between finitely generated projective modules. Although universal localisations were already successfully used in different areas of mathematics like topology (\cite{R}), algebraic K-theory (\cite{NR}) and tilting theory (\cite{AA}), the concept itself is not yet fully understood. In our context, universal localisations turn out to be relevant, since the modules over the localised ring always form a wide subcategory of the initial module category.

\smallskip

In this article, we work over an arbitrary finite dimensional algebra $A$ over a field and we are interested in the category $mod(A)$ of finite dimensional $A$-modules. We start by associating a torsion class to every wide subcategory and we show that this assignment is injective (Proposition \ref{prop wide}). However, in general, there are more torsion classes than wide subcategories (Example \ref{ex kro}). In a second step, we restrict the setting to functorially finite subcategories and we obtain an injection in the other direction -- from functorially finite torsion classes to functorially finite wide subcategories (Proposition \ref{prop alpha injective}). We also give sufficient conditions for this map to be bijective (Theorem \ref{main} and Corollary \ref{cor main}). In a final section, we discuss applications of our results to ring epimorphisms and universal localisations. In fact, classifying functorially finite wide subcategories in $mod(A)$ amounts to classifying certain epimorphisms of rings starting in $A$. We provide sufficient conditions for these ring epimorphisms to be universal localisations (Proposition \ref{prop univ-loc-Tor}). Moreover, we show that for representation finite algebras torsion classes and wide subcategories are in bijection with universal localisations (see Theorem \ref{main 2}).

\section{Notation}
Throughout, let $A$ be a finite dimensional algebra over an algebraically closed field $\Kbb$. The category of all finitely generated left $A$-modules will be denoted by $mod(A)$. All subcategories $\Ccal$ of $mod(A)$ are considered to be full and replete. A subcategory $\Ccal$ of $mod(A)$ is called \emph{functorially finite}, if every $A$-module admits both a left and a right $\Ccal$-approximation. More precisely, for every $X\in mod(A)$ we need objects $C_1,C_2\in\Ccal$ and morphisms $g_1:X\ra C_1$ and $g_2:C_2\ra X$ such that the maps $Hom_A(g_1,\tilde{C})$ and $Hom_A(\tilde{C},g_2)$ are surjective for all $\tilde{C}\in\Ccal$. Moreover, a subcategory $\Ccal$ is called \emph{wide}, if it is exact abelian and closed for extensions and it is called a \emph{torsion class}, if it is closed for quotients and extensions. By $wide(A)$ (respectively, $f\mbox{-}wide(A)$) we denote the class of all (functorially finite) wide subcategories in $mod(A)$. Moreover, by $tors(A)$ (respectively, $f\mbox{-}tors(A)$) we denote the class of all (functorially finite) torsion classes in $mod(A)$. For a module $X$ in $mod(A)$, we denote by $add(X)$ the subcategory of $mod(A)$ containing all direct summands of finite direct sums of copies of $X$. For a given subcategory $\Ccal$, $gen(\Ccal)$ describes the subcategory containing all quotients of finite direct sums of objects from $\Ccal$ and the subcategory $filt(\Ccal)$ is built from all finitely generated $A$-modules $X$ admitting a finite filtration of the form
$$0=F_0\subseteq F_1\subseteq F_2\subseteq ... \subseteq F_n= X$$
with $F_i/F_{i-1}\in\Ccal$ for all $1\leq i\leq n$ and $n\in\mathbb{N}$. We say that the module $X$ possesses a $\Ccal$-\emph{filtration of length} $n$ and the minimal length of such a filtration is called the $\Ccal$-\emph{length} of $X$. Finally, note that $filt(\Ccal)$ is the smallest subcategory of $mod(A)$ containing $\Ccal$ and being closed under extensions.

\section{Torsion classes and wide subcategories}

We start by associating a torsion class to every wide subcategory $\Wcal$ in $wide(A)$. We set $$\Tcal_\Wcal:=filt(gen(\Wcal)).$$

\begin{lemma}\label{lem torsion}
$\Tcal_\Wcal$ is a torsion class in $mod(A)$.
\end{lemma}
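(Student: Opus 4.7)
The plan is to verify the two defining closure properties of a torsion class for $\mathcal{T}_{\mathcal{W}} = \operatorname{filt}(\operatorname{gen}(\mathcal{W}))$: closure under extensions and closure under quotients.

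Extension closure is immediate from the final sentence of the Notation section: $\operatorname{filt}(-)$ produces the smallest extension-closed subcategory containing its argument. So nothing needs to be done there.

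The work lies in showing quotient closure. My strategy is a two-step reduction. First, observe that $\operatorname{gen}(\mathcal{W})$ itself is closed under quotients: if $M \in \operatorname{gen}(\mathcal{W})$ is a quotient of $\bigoplus_{i=1}^k W_i$ with $W_i \in \mathcal{W}$ and $M \twoheadrightarrow N$, then composing gives a surjection $\bigoplus_i W_i \twoheadrightarrow N$, so $N \in \operatorname{gen}(\mathcal{W})$. Second, I will prove the general lemma that $\operatorname{filt}(\mathcal{C})$ is quotient-closed whenever $\mathcal{C}$ is quotient-closed. This proceeds by induction on the $\mathcal{C}$-length $n$ of the filtration. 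The case $n \leq 1$ is the quotient-closure of $\mathcal{C}$ itself. For the inductive step, take $X \in \operatorname{filt}(\operatorname{gen}(\mathcal{W}))$ with a filtration $0 = F_0 \subseteq \cdots \subseteq F_n = X$ whose subquotients lie in $\operatorname{gen}(\mathcal{W})$, and take a surjection $\pi \colon X \twoheadrightarrow Y$. Setting $Y' := \pi(F_{n-1})$ gives a short exact sequence
$$0 \to Y' \to Y \to Y/Y' \to 0,$$
where $Y/Y'$ is a quotient of $X/F_{n-1} \in \operatorname{gen}(\mathcal{W})$ and $Y'$ is a quotient of $F_{n-1}$, which has $\operatorname{gen}(\mathcal{W})$-length at most $n-1$. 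By the induction hypothesis $Y' \in \operatorname{filt}(\operatorname{gen}(\mathcal{W}))$, by the base case $Y/Y' \in \operatorname{gen}(\mathcal{W}) \subseteq \operatorname{filt}(\operatorname{gen}(\mathcal{W}))$, and by extension closure $Y \in \operatorname{filt}(\operatorname{gen}(\mathcal{W}))$.

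There is no real obstacle here; the proof is essentially bookkeeping of filtrations under a surjection. The one conceptual point worth isolating is the little lemma that $\operatorname{filt}$ preserves quotient-closure, which is the only nontrivial verification and is what the induction above accomplishes.
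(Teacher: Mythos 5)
Your proof is correct and takes essentially the same approach as the paper: both argue by induction on the $gen(\Wcal)$-length, decompose $X$ via a short exact sequence into a $gen(\Wcal)$-piece and a piece of strictly smaller length, push it forward along the surjection, and conclude by quotient-closure of $gen(\Wcal)$ plus extension-closure of $filt$. The only cosmetic difference is that you peel off the top layer $F_{n-1}\subseteq X$ (so the submodule has smaller length and the quotient lies in $gen(\Wcal)$), whereas the paper peels off the bottom layer $M_1=F_1$ (so the submodule lies in $gen(\Wcal)$ and the quotient has smaller length).
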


\begin{proof}
By construction, it suffices to check closure for quotients. Take $X$ in $\Tcal_\Wcal$ and a surjection $X\ra X'$ in $mod(A)$. We show by induction on the $gen(\Wcal)$-length of $X$ that also $X'$ belongs to $\Tcal_\Wcal$. Clearly, if $X$ lies in $gen(\Wcal)\subseteq\Tcal_\Wcal$, then so does $X'$. For the general case, take a short exact sequence of the form
$$\xymatrix{0\ar[r] & M_1\ar[r] & X\ar[r] & M_2\ar[r] & 0}$$
with $M_1$ in $gen(\Wcal)$ and $M_2$ in $\Tcal_\Wcal$ of strictly smaller $gen(\Wcal)$-length than $X$. We get the following induced commutative diagram
$$\xymatrix{0\ar[r] & M_1\ar@{=}[d]\ar[r] & X\ar[d]\ar[r] & M_2\ar[d]\ar[r] & 0\\ & M_1\ar[r]^f & X'\ar[r] & coker(f)\ar[r] & 0}$$
By induction hypothesis, $coker(f)\in\Tcal_\Wcal$ and, thus, $X'\in\Tcal_\Wcal$, as an extension of modules in $\Tcal_\Wcal$.
\end{proof}

\begin{lemma}\label{lem subobjects}
The category $\Wcal$ is closed for subobjects in $\Tcal_\Wcal$. More precisely, if $W \in \Wcal$ and $X \subseteq W$ is a submodule that belongs to $\Tcal_\Wcal$, then $X \in \Wcal$. 
\end{lemma}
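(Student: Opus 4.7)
The plan is to induct on the $\mathrm{gen}(\Wcal)$-length of $X$, using the fact that $\Wcal$ is closed under cokernels in $\mathrm{mod}(A)$ (because it is an exact abelian subcategory) to reduce the inductive step to a submodule problem one level down.

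For the base case, suppose $X \in \mathrm{gen}(\Wcal)$, so there is a surjection $p : W' \twoheadrightarrow X$ with $W' \in \Wcal$. Composing with the inclusion $\iota : X \hookrightarrow W$ gives a morphism $\iota \circ p : W' \to W$ between objects of $\Wcal$ whose image in $\mathrm{mod}(A)$ is $X$. Since $\Wcal$ is an exact abelian subcategory of $\mathrm{mod}(A)$, the image of $\iota \circ p$ formed inside $\Wcal$ coincides with the image inside $\mathrm{mod}(A)$, so $X \in \Wcal$.

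For the inductive step, assume $X$ has $\mathrm{gen}(\Wcal)$-length $n \geq 2$. Pick a filtration realising this length and let $M_1$ be its first nonzero term, so that we have a short exact sequence
$$0 \ra M_1 \ra X \ra M_2 \ra 0$$
with $M_1 \in \mathrm{gen}(\Wcal)$ and $M_2 \in \Tcal_\Wcal$ of strictly smaller $\mathrm{gen}(\Wcal)$-length. Since $M_1 \subseteq X \subseteq W$ and $M_1 \in \mathrm{gen}(\Wcal) \subseteq \Tcal_\Wcal$, the base case yields $M_1 \in \Wcal$. Now the key move: because $\Wcal$ is abelian in $\mathrm{mod}(A)$ and $M_1 \hookrightarrow W$ is a map in $\Wcal$, the quotient $W/M_1$ again belongs to $\Wcal$, and $M_2 = X/M_1$ embeds naturally into $W/M_1$. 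Applying the induction hypothesis to this embedding gives $M_2 \in \Wcal$. Finally, $X$ is an extension of $M_2$ by $M_1$ with both $M_1, M_2 \in \Wcal$, and extension-closedness of $\Wcal$ forces $X \in \Wcal$.

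The only delicate point is the inductive step: a priori the inductive hypothesis only controls submodules of objects of $\Wcal$, whereas the natural candidate $M_2 = X/M_1$ is a quotient of $X$. The device that rescues the argument is that $\Wcal$ is abelian, so $W/M_1$ lies in $\Wcal$ and provides an ambient object in $\Wcal$ containing $M_2$ as a submodule. Everything else is formal bookkeeping with filtrations and the extension-closedness of $\Wcal$.
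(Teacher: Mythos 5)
Your proof is correct and follows essentially the same strategy as the paper's: induction on the $gen(\Wcal)$-length of $X$, handling the base case via closure of $\Wcal$ under images, and reducing the inductive step to the embedding $M_2 = X/M_1 \hookrightarrow W/M_1$ with $W/M_1 \in \Wcal$ providing the new ambient object. The only cosmetic difference is that you obtain the embedding $X/M_1 \hookrightarrow W/M_1$ directly from the third isomorphism theorem, whereas the paper draws a commutative diagram and invokes the Snake lemma to identify $M_2 \cong \ker(\pi)$; these are the same observation.
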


\begin{proof}
Take an object $W$ in $\Wcal$ together with an injection $i:X\ra W$ where $X$ belongs to $\Tcal_\Wcal$. We have to show that $X\in\Wcal$. We proceed by induction on the $gen(\Wcal)$-length of $X$. To begin with, if $X$ is in $gen(\Wcal)$, then it already lies in $\Wcal$ - as the image of a map in $\Wcal$. For the general case, take a short exact sequence of the form
$$\xymatrix{0\ar[r] & M_1\ar[r] & X\ar[r] & M_2\ar[r] & 0}$$
as in the proof of Lemma \ref{lem torsion}. We get the following induced commutative diagram
$$\xymatrix{0\ar[r] & M_1\ar[r]^{i_1}\ar[d] & W\ar[r]\ar@{=}[d] & coker(i_1)\ar[r]\ar[d]^\pi & 0\\ 0\ar[r] & X\ar[r]^i & W\ar[r] & coker(i)\ar[r] & 0}$$
Since $M_1$ lies in $gen(\Wcal)$ it already belongs to $\Wcal$ and, thus, so does $coker(i_1)$. Moreover, by the Snake lemma, we have $M_2\cong ker(\pi)$ and, hence, by induction hypothesis, also $M_2$ is in $\Wcal$. Finally, since $\Wcal$ is closed under extensions, one gets $X\in\Wcal$ - as wanted.
\end{proof}

Next, we show how to recover the wide subcategory $\Wcal$ from the torsion class $\Tcal_\Wcal$. We need the following construction due to \cite{IT}. Let $\Tcal$ be  a torsion class. We define 
$$\alpha(\Tcal):=\{X\in\Tcal\mid\forall(g:Y\ra X)\in\Tcal, ker(g)\in\Tcal\}.$$

Following \cite[Proposition 2.12]{IT}, $\alpha(\Tcal)$ is a wide subcategory of $mod(A)$. This result is actually an instance of a more general fact~\cite[Exercise 8.23]{KS}, since $\alpha(\Tcal)$ consists precisely of what is called \emph{$\Tcal$-coherent objects} in~\cite{KS}.

\begin{proposition}\label{prop wide}
Let $\Wcal$ be a wide subcategory of $mod(A)$. Then $\Wcal=\alpha(\Tcal_\Wcal)$. In particular, mapping $\Wcal$ to $\Tcal_\Wcal$ yields an injection from $wide(A)$ to $tors(A)$.
\end{proposition}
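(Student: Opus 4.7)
My plan is to establish the two inclusions $\Wcal \subseteq \alpha(\Tcal_\Wcal)$ and $\alpha(\Tcal_\Wcal) \subseteq \Wcal$ separately; the injectivity of $\Wcal \mapsto \Tcal_\Wcal$ from $wide(A)$ to $tors(A)$ then follows immediately by applying $\alpha$ to any equality $\Tcal_{\Wcal_1} = \Tcal_{\Wcal_2}$.

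For $\Wcal \subseteq \alpha(\Tcal_\Wcal)$, I would fix $W \in \Wcal$ and a map $g \colon Y \to W$ with $Y \in \Tcal_\Wcal$ and check that $ker(g) \in \Tcal_\Wcal$. Factoring $g$ as $Y \twoheadrightarrow \Img(g) \hookrightarrow W$, the image is a quotient of $Y \in \Tcal_\Wcal$ and a submodule of $W \in \Wcal$, so Lemma \ref{lem subobjects} forces $\Img(g) \in \Wcal$. This reduces the task to an auxiliary claim: if $Y \in \Tcal_\Wcal$ surjects via some $q$ onto $W' \in \Wcal$, then $ker(q) \in \Tcal_\Wcal$. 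I would prove this by induction on the $gen(\Wcal)$-length of $Y$. In the base case $Y \in gen(\Wcal)$, pulling $q$ back along a surjection $W'' \twoheadrightarrow Y$ with $W'' \in \Wcal$ exhibits $ker(q)$ as the image of the kernel of a map in $\Wcal$, hence in $gen(\Wcal) \subseteq \Tcal_\Wcal$. For the inductive step, take a sequence $0 \to M_1 \to Y \to M_2 \to 0$ with $M_1 \in gen(\Wcal)$ and $M_2$ of smaller length. The submodule $q(M_1) \subseteq W'$ lies in $\Tcal_\Wcal$ as a quotient of $M_1$ and therefore in $\Wcal$ by Lemma \ref{lem subobjects}; hence $W'/q(M_1) \in \Wcal$ as well. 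The snake lemma then produces an exact sequence $0 \to ker(q|_{M_1}) \to ker(q) \to ker(\bar q) \to 0$ whose outer terms lie in $\Tcal_\Wcal$ by the base case and the induction hypothesis respectively, so extension-closure finishes the step.

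For $\alpha(\Tcal_\Wcal) \subseteq \Wcal$, fix $X \in \alpha(\Tcal_\Wcal)$ and consider the trace $tr_{\Wcal}(X) := \sum_{W \in \Wcal,\, f \colon W \to X} f(W) \subseteq X$. The decisive observation is that for every such $f$, the defining property of $\alpha$ gives $ker(f) \in \Tcal_\Wcal$, whereupon Lemma \ref{lem subobjects} promotes $ker(f)$ to $\Wcal$ and hence $\Img(f) = W/ker(f) \in \Wcal$. Because $mod(A)$ is noetherian, finitely many maps already realize the trace, and combining them into a single map from a direct sum exhibits $tr_{\Wcal}(X)$ itself as such an image, so $tr_{\Wcal}(X) \in \Wcal$. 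It remains to see $tr_{\Wcal}(X) = X$. If this failed, then $Y := X/tr_{\Wcal}(X)$ would be a nonzero object of $\Tcal_\Wcal$; taking a nonzero submodule of $Y$ from $gen(\Wcal)$ (which exists by the bottom of its $gen(\Wcal)$-filtration) produces a nonzero map $\phi \colon W' \to Y$ with $W' \in \Wcal$. Pulling back the surjection $X \twoheadrightarrow Y$ along $\phi$ yields $P$ fitting into $0 \to tr_{\Wcal}(X) \to P \to W' \to 0$, and extension-closure of $\Wcal$ forces $P \in \Wcal$. The pullback projection $P \to X$ is then a map from $\Wcal$, so its image must sit inside $tr_{\Wcal}(X)$; yet by construction that image contains $tr_{\Wcal}(X)$ with quotient $\Img(\phi) \neq 0$, a contradiction.

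The main obstacle is the reverse inclusion: one cannot in general lift a $gen(\Wcal)$-filtration of $X$ to a surjection from $\Wcal$ step by step, because the relevant $\Ext^1$ groups need not vanish. The trace-plus-pullback mechanism avoids this by packaging all maps from $\Wcal$ into the single subobject $tr_{\Wcal}(X)$ and exploiting extension-closure of $\Wcal$ to fabricate, out of any hypothetical residue of $X$ outside the trace, a new witness in $\Wcal$ whose image in $X$ properly exceeds $tr_{\Wcal}(X)$, contradicting the maximality built into the construction.
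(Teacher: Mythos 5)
Your first inclusion $\Wcal \subseteq \alpha(\Tcal_\Wcal)$ proceeds the same way as the paper's: reduce via Lemma \ref{lem subobjects} and argue by induction on the $gen(\Wcal)$-length, with the minor streamlining that you factor $g$ through its image up front so that the induction needs to treat only surjections onto objects of $\Wcal$ (the paper carries the general test map $g\colon Y \to W$ through a slightly larger commutative diagram; both are correct and of comparable effort). For the reverse inclusion $\alpha(\Tcal_\Wcal)\subseteq\Wcal$, your argument is genuinely different. The paper again inducts on the $gen(\Wcal)$-length of $X$, using at the inductive step that $\alpha(\Tcal_\Wcal)$ is closed under subobjects and quotients inside $\Tcal_\Wcal$ (which is where the wideness of $\alpha(\Tcal_\Wcal)$, cited from \cite[Proposition 2.12]{IT}, comes in). You instead consider the trace $tr_\Wcal(X)$: each map $f\colon W\to X$ from $\Wcal$ has kernel in $\Tcal_\Wcal$ by the definition of $\alpha$, hence in $\Wcal$ by Lemma \ref{lem subobjects}, so its image lies in $\Wcal$; by the Noetherian property the trace is such an image and thus in $\Wcal$; and if the trace were proper you pull a nonzero subobject of $X/tr_\Wcal(X)$ coming from $gen(\Wcal)$ back along $X\twoheadrightarrow X/tr_\Wcal(X)$, obtaining by extension-closure a module in $\Wcal$ mapping to $X$ with image strictly exceeding the trace -- a contradiction. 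What this buys is self-containedness: your argument for this inclusion never uses that $\alpha(\Tcal_\Wcal)$ is abelian or closed under subquotients, only the literal kernel condition in the definition of $\alpha$, whereas the paper's induction implicitly leans on those closure properties. The paper's route in turn has the advantage of being structurally uniform with the first inclusion (the same filtration induction in both directions) and of not needing the trace/Noetherian finiteness detour. Both are valid complete proofs.
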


\begin{proof}
We start by showing that $\Wcal$ is contained in $\alpha(\Tcal_\Wcal)$. Take $W$ in $\Wcal$ and a test map $g:Y\ra W$ with $Y$ in $\Tcal_\Wcal$. We have to show that $ker(g)$ belongs to $\Tcal_\Wcal$. First, assume that $Y$ belongs to $gen(\Wcal)$. Thus, there is some $W'$ in $\Wcal$ and a surjection $\pi$ yielding the following commutative diagram of $A$-modules
$$\xymatrix{0\ar[r] & ker(g')\ar[r]\ar[d]_{\pi'} & W'\ar[r]^{g'}\ar[d]_\pi & W\ar@{=}[d]\\ 0\ar[r] & ker(g)\ar[r] & Y\ar[r]^g & W}$$
It follows that also the map $\pi'$ must be surjective.
Therefore, since $ker(g')$ belongs to $\Wcal$, the kernel of $g$ is in $gen(\Wcal)\subseteq\Tcal_\Wcal$ - as wanted. For the general case, take a short exact sequence
$$\xymatrix{0\ar[r] & M_1\ar[r]^i & Y\ar[r] & M_2\ar[r] & 0}$$
with $M_1$ in $gen(\Wcal)$ and $M_2$ in $\Tcal_\Wcal$ of strictly smaller $gen(\Wcal)$-length than $Y$. We get the following induced commutative diagram of $A$-modules with exact columns
$$\xymatrix{0\ar[r] & ker(g\circ i)\ar[r]\ar[d]^{i_1} & M_1\ar[r]\ar[d]^{i} & Im(g\circ i)\ar[d]\ar[r] & 0\\ 0\ar[r] & ker(g)\ar[d]\ar[r] & Y\ar[d]\ar[r]^g & W\ar[d] & \\ 0\ar[r] & coker(i_1)\ar[r] & M_2\ar[r] & coker(g\circ i) & }$$
By Lemma \ref{lem subobjects}, $Im(g\circ i)$ is in $\Wcal$ and, thus, so is $coker(g\circ i)$. Now it follows from induction that $ker(g\circ i)$ and $coker(i_1)$ belong to $\Tcal_\Wcal$. Hence, since $\Tcal_\Wcal$ is closed for extensions, $ker(g)$ is in $\Tcal_\Wcal$.

Next, we check that $\alpha(\Tcal_\Wcal)\subseteq\Wcal$. Take a module $X$ in $\alpha(\Tcal_\Wcal)$. First, assume that $X$ belongs to $gen(\Wcal)$. Thus, there is some $W\in\Wcal$ and a surjection $\pi:W\ra X$. By assumption, $K:=ker(\pi)$ lies in $\Tcal_\Wcal$. Hence, by Lemma~\ref{lem subobjects}, $K$ is in $\Wcal$ and, since $\Wcal$ is wide, $X \in \Wcal$.
For the general case, take a short exact sequence of the form
$$\xymatrix{0\ar[r] & M_1\ar[r] & X\ar[r] & M_2\ar[r] & 0}$$
with $M_1$ in $gen(\Wcal)$ and $M_2$ in $\Tcal_\Wcal$ of strictly smaller $gen(\Wcal)$-length than $X$. Since, by definition, $\alpha(\Tcal_\Wcal)$ is closed for subobjects in $\Tcal_\Wcal$, it follows that also $M_1$ belongs to $\alpha(\Tcal_\Wcal)$ and, hence, so does $M_2$. Using induction, this already implies that $M_1$ and $M_2$ are in $\Wcal$. In particular, $X$ belongs to $\Wcal$ - as an extension of $M_1$ and $M_2$. This finishes the proof.
\end{proof}

The following example illustrates that, in general, for a given algebra $A$ there are more torsion classes than wide subcategories in $mod(A)$.

\begin{example}\label{ex kro}
Let $A$ be the Kronecker algebra defined as the path algebra over $\mathbb{K}$ of the quiver
$$\xymatrix{\bullet\ar@<.5ex>[r]\ar@<-.5ex>[r] &\bullet}$$
Let $\Tcal\subseteq mod(A)$ be the torsion class given by all preinjective $A$-modules. It is not hard to check that $\alpha(\Tcal)=\{0\}$. In particular, there is no $\Wcal\in wide(A)$ such that $\Tcal_\Wcal=\Tcal$.
\end{example}

Next, we restrict the setting to functorially finite torsion classes and functorially finite wide subcategories. First we recall important related definitions.

\begin{definition}[{\cite[Definition 3.10]{AMV}}] \label{defn silting}
Let $T \in mod(A)$ and $\sigma: P_1 \to P_0$ be a projective presentation of $T$. Let us denote by $\Dcal_\sigma$ the class
\[ \Dcal_\sigma := \{X\in mod(A)\mid Hom_A(\sigma,X)\text{ surjective}\}. \]
We say that $T$ is \emph{silting with respect to $\sigma$} if $\Dcal_\sigma=gen(T)$.
We say that $T$ is \emph{silting} if there exists a projective presentation $\sigma$ of $T$ with respect to which $T$ is silting.
\end{definition}

This concept was defined in \cite{AMV} in the category of all modules, but if $T \in mod(A)$, our definition is easily seen to be equivalent. It was shown in~\cite[Proposition 3.16]{AMV} that finite dimensional silting modules over $A$ coincide with support $\tau$-tilting modules from~\cite{AIR}.

Now \cite[Theorem 2.7]{AIR} asserts that the functorially finite torsion classes $\Tcal \subseteq mod(A)$ are parametrised by the isomorphism classes of basic silting modules, i.e.\ the silting modules $T$ with a decomposition $T=M_1\oplus\cdots\oplus M_n$ where all the $M_i$ are indecomposable and $M_i\not\cong M_j$ for all $i\not= j$. Given $T$ silting, the corresponding functorially finite torsion class is simply $\Tcal=gen(T)$. If we conversely start with a torsion class $\Tcal$, there is a $\Tcal$-approximation sequence
$$\xymatrix@1{A\ar[r]^\phi & T_0\ar[r] & T_1\ar[r] & 0}$$
with $\phi$ left-minimal, i.e.\ for all $h\in End_A(T_0)$, $h\circ\phi=\phi$ implies that $h$ is an isomorphism. 
Now, the corresponding silting module is given by the direct sum of all non-isomorphic indecomposable direct summands of $T_0$ and $T_1$ (compare Lemma~\ref{lem split-proj} below).

\begin{definition} \label{defn split-proj}
Let $\Ccal$ be a class of $A$-modules. A module $P \in \Ccal$ is \emph{Ext-projective} in $\Ccal$ if we have $\Ext^1_A(P,M) = 0$ for all $M \in \Ccal$. The module $P$ is \emph{split projective} in $\Ccal$ if each surjective morphism $M \to P$ with $M \in \Ccal$ splits.
\end{definition}

\begin{lemma} \label{lem split-proj}
Let $\Tcal \subseteq mod(A)$ be a functorially finite torsion class and let
\[ \xymatrix@1{A\ar[r]^{\phi} & T_0\ar[r] & T_1\ar[r] & 0} \]
be an exact sequence such that $\phi$ is a minimal left $\Tcal$-approximation. Then $add(T_0 \oplus T_1)$ is the class of all Ext-projective modules in $\Tcal$. Moreover, an indecomposable module $P \in add(T_0 \oplus T_1)$ is split projective in $\Tcal$ if and only if it is a summand of $T_0$.
\end{lemma}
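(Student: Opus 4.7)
For the Ext-projective characterisation, I first verify that both $T_0$ and $T_1$ are Ext-projective in $\Tcal$. Given a short exact sequence $0 \to M \to E \xrightarrow{p} T_0 \to 0$ with $M \in \Tcal$, extension closure gives $E \in \Tcal$. Using projectivity of $A$, lift $\phi$ through $p$ to some $\tilde\phi : A \to E$, and apply the left $\Tcal$-approximation property to factor $\tilde\phi = h\phi$ for some $h : T_0 \to E$. The endomorphism $ph \in \End_A(T_0)$ satisfies $(ph)\phi = \phi$, so left-minimality of $\phi$ forces $ph$ to be an automorphism of $T_0$, and $h(ph)^{-1}$ is a section of $p$. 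For $T_1$: for $M \in \Tcal$ apply $\Hom_A(-, M)$ to $0 \to \Img(\phi) \to T_0 \to T_1 \to 0$. Any $f : \Img(\phi) \to M$ pulled back along the surjection $A \twoheadrightarrow \Img(\phi)$ factors through $\phi$ by the approximation property, and the resulting map $T_0 \to M$ restricts to $f$ on $\Img(\phi)$. Hence $\Hom_A(T_0, M) \twoheadrightarrow \Hom_A(\Img(\phi), M)$, which together with $\Ext^1_A(T_0, M) = 0$ forces $\Ext^1_A(T_1, M) = 0$.

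For the reverse inclusion I appeal to the AIR correspondence \cite[Theorem 2.7]{AIR}: the basic support $\tau$-tilting module $T$ corresponding to $\Tcal$ is obtained precisely as the basic version of $T_0 \oplus T_1$ via the approximation sequence above (using the uniqueness up to isomorphism of minimal left $\Tcal$-approximations of $A$), and AIR identifies $add(T)$ with the class of Ext-projective modules in $\Tcal$. In particular, every Ext-projective lies in $add(T) = add(T_0 \oplus T_1)$. To support this identification one may also note that $gen(T_0) = \Tcal$, since any surjection $A^n \twoheadrightarrow X \in \Tcal$ factors through $\phi^n$, yielding $T_0^n \twoheadrightarrow X$; so $T_0 \oplus T_1$ is indeed an Ext-projective generator of $\Tcal$.

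For the split projective characterisation, suppose first that $P$ is an indecomposable summand of $T_0$, and write $T_0 = P \oplus T_0'$. Given a surjection $f : M \to P$ with $M \in \Tcal$, denote the components of $\phi$ by $\phi_1 : A \to P$ and $\phi_2 : A \to T_0'$. Lift $\phi_1$ through $f$ to $\tilde\phi_1 : A \to M$ by projectivity of $A$, and factor the resulting map $A \to M \oplus T_0' \in \Tcal$ with components $(\tilde\phi_1, \phi_2)$ through $\phi$, obtaining $g : T_0 \to M \oplus T_0'$. Post-composing $g$ with the map $M \oplus T_0' \to P \oplus T_0' = T_0$ sending $(m, t') \mapsto (f(m), t')$ produces $h \in \End_A(T_0)$ with $h\phi = \phi$, which left-minimality renders invertible; the composite $P \hookrightarrow T_0 \xrightarrow{gh^{-1}} M \oplus T_0' \twoheadrightarrow M$ is the desired section of $f$. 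Conversely, if $P$ is an indecomposable summand of $T_0 \oplus T_1$ that is not a summand of $T_0$, then $P$ is a summand of $T_1$, and the surjection $T_0 \twoheadrightarrow T_1 \twoheadrightarrow P$ from $T_0 \in \Tcal$ cannot split: a splitting would realise $P$ as a summand of $T_0$ by Krull--Schmidt, contradicting our assumption. The main obstacle lies in the reverse inclusion of the Ext-projective characterisation, which genuinely depends on the AIR parametrisation; all other steps reduce to direct diagram chases exploiting projectivity of $A$, left-minimality of $\phi$, and the approximation property.
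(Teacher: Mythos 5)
Your proof is correct and uses essentially the same ingredients as the paper's: projectivity of $A$ to lift maps along surjections, the left $\Tcal$-approximation property to factor them through $\phi$, and left-minimality of $\phi$ to force the resulting endomorphism of $T_0$ to be invertible. The differences are organisational rather than conceptual: you verify directly that $T_0$ and $T_1$ are Ext-projective (the paper simply cites \cite[\S1]{AIR} for the entire Ext-projective characterisation), you handle the split-projectivity of a summand $P$ of $T_0$ by decomposing $T_0 = P \oplus T_0'$ and working with $M\oplus T_0'$, whereas the paper first reduces to surjections onto all of $T_0$ via the trick that $p$ splits iff $p\oplus 1_{T_0/P}$ does, and you give the ``only if'' direction by contrapositive (a summand of $T_1$ not in $add(T_0)$ receives the non-split surjection $T_0\twoheadrightarrow T_1\twoheadrightarrow P$), whereas the paper argues directly from $\Tcal=gen(T_0)$. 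Like the paper, you still lean on \cite[Theorem 2.7]{AIR} for the reverse inclusion that every Ext-projective lies in $add(T_0\oplus T_1)$; this is consistent with the paper's own phrasing that that part ``follows essentially from \cite{AIR}.''
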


\begin{proof}
We need to prove only the last sentence, the rest follows essentially from~\cite[\S1]{AIR}. Suppose that $P$ is an indecomposable summand of $T_0 \oplus T_1$. 
If $P$ is split projective in $\Tcal$, then, since $\Tcal = gen(T_0)$, there is a split epimorphism $T_0^n \to P$ and $P$ is a summand of $T_0$.

Suppose conversely that $P$ is a summand of $T_0$ and consider a surjection $p\colon X \to P$ with $X \in \Tcal$. We must prove that $p$ splits. Since $\Tcal = gen(T_0)$, we can without loss of generality assume that $X \in add(T_0)$. Moreover, $p$ splits if and only if $p \oplus 1_{T_0/P}\colon X \oplus T_0/P \to T_0$ splits. Hence, it suffices to prove that every surjection $p'\colon X' \to T_0$ with $X' \in add(T_0)$ splits. To this end, $\phi\colon A \to T_0$ factors as $\phi = p'\circ f$ for some $f\colon A \to X'$, since $p'$ is surjective. On the other hand, since $X' \in add(T_0)$, there is a factorization $f = i\circ\phi$ for some $i\colon T_0 \to X'$ yielding the commutative diagram
\[
\xymatrix{
& A \ar[dl]_\phi \ar[d]^f \ar[dr]^\phi \\
T_0 \ar[r]_i & X' \ar@{->>}[r]_{p'} & T_0
}
\]
Putting this all together, we have $\phi = p'\circ i\circ\phi$. Since $\phi$ is left-minimal, $p'$ splits as required.
\end{proof}

For an $A$-module $M$, we denote by $M^\circ$ the subcategory of $mod(A)$ given by all modules $X$ such that $Hom_A(M,X)=0$. For a functorially finite torsion class $\Tcal=gen(T)$ in $mod(A)$ and $T_1$ chosen as above, we are interested in the intersection $\Wcal_\Tcal:=\Tcal\cap T_1^\circ$. Note that, since $T_1$ is Ext-projective in $\Tcal$, it follows that $(gen(T_1),T_1^\circ)$ is a torsion pair in $mod(A)$. In particular, using the main theorem in \cite{S}, $T_1^\circ$ is a functorially finite subcategory of $mod(A)$. Consequently, also $\Wcal_\Tcal$ is functorially finite. Indeed, we can construct left $\Wcal_\Tcal$-approximations by first taking a left $\Tcal$-approximation and then factoring out the $gen(T_1)$-torsion part; right approximations are constructed dually (compare to the approach in \cite{AMV2}).

\begin{lemma}\label{lemma alternative description}
Let $\Tcal=gen(T)$ be a functorially finite torsion class in $mod(A)$. Then $\alpha(\Tcal)$ coincides with $\Wcal_\Tcal$. In particular, $\alpha(\Tcal)$ is functorially finite.
\end{lemma}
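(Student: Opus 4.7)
The plan is to prove the two inclusions $\alpha(\Tcal)\subseteq\Wcal_\Tcal$ and $\Wcal_\Tcal\subseteq\alpha(\Tcal)$ separately, and then read off functorial finiteness of $\alpha(\Tcal)$ from that of $\Wcal_\Tcal$, which was already established. The main ingredients are those of Lemma \ref{lem split-proj}: Ext-projectivity of $T_0,T_1$ in $\Tcal$, the fact that no indecomposable summand of $T_1$ is split projective in $\Tcal$, and that $\phi\colon A\to T_0$ is a left $\Tcal$-approximation with $coker(\phi)=T_1$.

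For $\alpha(\Tcal)\subseteq\Wcal_\Tcal$, I would take $X\in\alpha(\Tcal)$ and any $g\colon T_1\to X$, and aim at $g=0$. Note $T_1\in\Tcal$ as a quotient of $T_0$, so the defining property of $\alpha$ gives $ker(g)\in\Tcal$, and Ext-projectivity of $T_1$ splits $0\to ker(g)\to T_1\to Im(g)\to 0$, exhibiting $Im(g)$ as a summand of $T_1$. The key step is to show that $Im(g)$ is then also \emph{split} projective in $\Tcal$: for any surjection $q\colon M\to Im(g)$ with $M\in\Tcal$, the composition $M\to Im(g)\hookrightarrow X$ has kernel $ker(q)$, which lies in $\Tcal$ since $X\in\alpha(\Tcal)$; Ext-projectivity of the summand $Im(g)$ of $T_1$ then splits $q$. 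Lemma \ref{lem split-proj} now forces every indecomposable summand of $Im(g)$ to be a summand of $T_0$, but these are simultaneously summands of $T_1$, and $T_0\oplus T_1$ being basic forces $Im(g)=0$.

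For $\Wcal_\Tcal\subseteq\alpha(\Tcal)$, I would take $X\in\Wcal_\Tcal$ and $g\colon Y\to X$ with $Y\in\Tcal$, and must show $L:=ker(g)\in\Tcal$. The image $Im(g)$ lies in $\Wcal_\Tcal$ because it is a quotient of $Y\in\Tcal$ and a submodule of $X\in T_1^\circ$ (and $T_1^\circ$ is clearly closed under submodules). Letting $(\Tcal,\Fcal)$ be the torsion pair associated with $\Tcal$ and $t(L)\subseteq L$ the torsion part of $L$, I pass to the short exact sequence $0\to L/t(L)\to Y/t(L)\to Im(g)\to 0$ with $Y/t(L), Im(g)\in\Tcal$ and $L/t(L)\in\Fcal$, so it suffices to see $L/t(L)=0$. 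The decisive step, and the one I expect to be the main obstacle, is a combined use of $\phi$ and the cokernel identity $coker(\phi)=T_1$: for $l\in L/t(L)$, represent $l$ as $\tilde l\colon A\to L/t(L)\hookrightarrow Y/t(L)$, factor $\tilde l=h\circ\phi$ via $Y/t(L)\in\Tcal$, and compose with the quotient to $Im(g)$ to obtain $h'\colon T_0\to Im(g)$ with $h'\circ\phi=0$. Then $h'$ factors through $T_1$; using $Im(g)\in T_1^\circ$ shows this factorization vanishes, so $h$ actually lands in $L/t(L)$, and $\Hom_A(T_0,L/t(L))=0$ (from $T_0\in\Tcal$ and $L/t(L)\in\Fcal$) forces $h=0$ and hence $l=0$. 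The delicate point is passing to $L/t(L)$ at just the right moment, so that both the $T_0$-against-$\Fcal$ and the $T_1^\circ$-against-$Im(g)$ vanishings can be combined in a single diagram chase.
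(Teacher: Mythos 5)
Your proof takes a genuinely different route from the paper's, and the second inclusion is fine, but the first inclusion currently rests on an unjustified claim. The paper proves $\alpha(\Tcal)\subseteq\Wcal_\Tcal$ by a direct minimality argument: given $h\colon T_1\to X$ with $X\in\alpha(\Tcal)$, it builds the map $\bar h\colon T_0\to Im(h)$, observes that $\ker(\bar h)\in\Tcal$ by the defining property of $\alpha(\Tcal)$, lifts $\phi$ through the inclusion $\ker(\bar h)\hookrightarrow T_0$ using the approximation property, and concludes from left-minimality of $\phi$ that $\ker(\bar h)\hookrightarrow T_0$ is onto, hence $Im(h)=0$. Your route is more structural and quite elegant: Ext-projectivity of $T_1$ makes $Im(g)$ a direct summand of $T_1$, the $\alpha$-property of $X$ upgrades this to $Im(g)$ being split projective in $\Tcal$, and Lemma~\ref{lem split-proj} then forces every indecomposable summand of $Im(g)$ to lie in $add(T_0)$. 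So far this is correct. The gap is the final step: you conclude $Im(g)=0$ because ``$T_0\oplus T_1$ is basic.'' This is neither stated in the paper nor true in general — nothing prevents $T_0$ from having repeated indecomposable summands. What your argument actually requires is the weaker fact that $add(T_0)\cap add(T_1)=\{0\}$, i.e.\ no indecomposable module is simultaneously a summand of $T_0$ and of $T_1$. This is true but needs its own proof, roughly as follows: if $P$ were such a common summand, then by Lemma~\ref{lem split-proj} it is split projective, so the surjection $T_0\twoheadrightarrow T_1\twoheadrightarrow P$ splits; the resulting idempotent $e=s\circ\psi'\in End_A(T_0)$ satisfies $e\circ\phi=0$ (because $\psi'\circ\phi=0$) and $e\neq 0$, so $(1-e)\circ\phi=\phi$ with $1-e$ a non-invertible idempotent, contradicting left-minimality of $\phi$. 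Once this is supplied, your first inclusion stands. (Note that the paper sidesteps all of this by applying left-minimality directly rather than through Lemma~\ref{lem split-proj}.)

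Your proof of $\Wcal_\Tcal\subseteq\alpha(\Tcal)$ is correct and differs in flavour from the paper's: you pass to the torsion-free quotient $L/t(L)$ and run an elementwise factorization argument combining $\Hom_A(T_0,\Fcal)=0$ with $\Hom_A(T_1,Im(g))=0$. The paper instead chooses a surjection $A^n\twoheadrightarrow\ker(g)$, lifts it through $\phi^n\colon A^n\to T_0^n$ (possible since the composite into $Y$ lands in $\Tcal$), uses $\Hom_A(T_1,Im(g))=0$ to see the lift factors through $\ker(g)$, and reads off $\ker(g)\in gen(T_0)=\Tcal$ in one step. Both work; the paper's version avoids the detour through the torsion part and is shorter, but yours is conceptually transparent about why $T_1^\circ$ is exactly what is needed.
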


\begin{proof}
First, take $X$ in $\Wcal_\Tcal$ and consider a map $g:Y\ra X$ in $\Tcal$. We have to check that $ker(g)\in\Tcal$. Note that, by assumption, also $Im(g)$ lies in $\Wcal_\Tcal$.
Now we choose a surjection $\pi:A^n\ra ker(g)$ for some $n>0$. Using the left $\Tcal$-approximation $\phi:A\ra T_0$, we obtain the following commutative diagram of $A$-modules
$$\xymatrix{& A^n\ar[r]^{\phi^n}\ar[d]_{\pi} & T_0^n\ar[r]\ar[d] & T_1^n\ar[d]\ar[r] & 0\\ 0\ar[r] & ker(g)\ar[r] & Y\ar[r]^g & Im(g)\ar[r] & 0}$$
Since $Hom_A(T_1,Im(g))=0$, there is a surjection $\tilde{\pi}:T_0^n\ra ker(g)$ such that $\pi=\tilde{\pi}\circ\phi^n$. Hence, $ker(g)$  belongs to $\Tcal$.

For the other inclusion, we take some $X$ in $\alpha(\Tcal)$. It suffices to show that $Hom_A(T_1,X)=0$. Let $h:T_1\ra X$ be such a map. We will prove that $Im(h)$ is zero. First of all, observe that also $Im(h)$ lies in $\alpha(\Tcal)$. Now consider the following induced commutative diagram of $A$-modules
$$\xymatrix{& A\ar@{=}[r]\ar[d]^{\bar{\phi}} & A\ar[d]^\phi & & \\ 0\ar[r] & ker(\bar{h})\ar[r]^{i_0}\ar[d] & T_0\ar[r]^{\bar{h}}\ar[d] & Im(h)\ar[r]\ar@{=}[d] & 0\\ 0\ar[r] & ker(h)\ar[r]\ar[d] & T_1\ar[r]^h\ar[d] & Im(h)\ar[r] & 0\\ & 0 & 0 & &}$$
Since $Im(h)$ belongs to $\alpha(\Tcal)$, the module $ker(\bar{h})$ has to be in $\Tcal$. Hence, using that $\phi$ is a left $\Tcal$-approximation, there is a map $s:T_0\ra ker(\bar{h})$ with $s\circ\phi=\bar{\phi}$. It follows that $\phi=i_0\circ\bar{\phi}=i_0\circ s\circ\phi$. Since $\phi$ is left-minimal, $i_0\circ s$ has to be an isomorphism and, thus, so does $i_0$. In particular, $Im(h)$ is zero, as wanted.
\end{proof}

Now we are able to state the following key proposition in our context.

\begin{proposition}\label{prop alpha injective}
Let $\Tcal$ be in $f\mbox{-}tors(A)$. Then $\Tcal=\Tcal_{\alpha(\Tcal)}=filt(gen(\alpha(\Tcal)))$. In particular, mapping $\Tcal$ to $\alpha(\Tcal)$ yields an injection from $f\mbox{-}tors(A)$ to $f\mbox{-}wide(A)$.
\end{proposition}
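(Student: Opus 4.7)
The plan is to establish both inclusions in $\Tcal = \Tcal_{\alpha(\Tcal)} = filt(gen(\alpha(\Tcal)))$; once this equality is in hand, the injectivity of $\Tcal \mapsto \alpha(\Tcal)$ will be formal. The inclusion $filt(gen(\alpha(\Tcal))) \subseteq \Tcal$ is immediate: $\alpha(\Tcal) \subseteq \Tcal$ and $\Tcal$ is closed under quotients and extensions, so $gen(\alpha(\Tcal)) \subseteq \Tcal$ and hence also $filt(gen(\alpha(\Tcal))) \subseteq \Tcal$.

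For the reverse inclusion I would first invoke Lemma~\ref{lem torsion} applied to the wide subcategory $\alpha(\Tcal)$ to see that $\Tcal_{\alpha(\Tcal)}$ is itself a torsion class, hence closed under taking quotients. Because $T_1 = coker(\phi)$ is a quotient of $T_0$ via $\pi$, the silting module $T = T_0 \oplus T_1$ is a quotient of $T_0^2$, so $\Tcal = gen(T) = gen(T_0)$. It thus suffices to show $T_0 \in \Tcal_{\alpha(\Tcal)}$. My plan is to prove the stronger statement $T_0 \in \alpha(\Tcal)$: by Lemma~\ref{lemma alternative description}, $\alpha(\Tcal) = \Tcal \cap T_1^\circ$, so since $T_0 \in \Tcal$ this amounts to the vanishing $Hom_A(T_1, T_0) = 0$.

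Establishing this vanishing is what I expect to be the main obstacle. Given any $h \colon T_1 \to T_0$, the endomorphism $\psi := h\pi \in End_A(T_0)$ satisfies $\psi \circ \phi = h \circ (\pi \circ \phi) = 0$. Consequently $(1_{T_0} + f\psi) \circ \phi = \phi$ for every $f \in End_A(T_0)$, and the left-minimality of $\phi$ yields that each $1 + f\psi$ is an automorphism of $T_0$. This forces $\psi$ into the Jacobson radical of the finite dimensional algebra $End_A(T_0)$, which is nilpotent. To upgrade nilpotency of $\psi$ to $h = 0$, I would take $n$ minimal with $\psi^n = 0$: the case $n = 1$ directly yields $h = 0$ from $\pi$ being epic, while for $n \geq 2$ one writes $\psi^{n-1} = h'\pi$ with $h' = h \circ (\pi h)^{n-2}$, notes that $(h'\pi)^2 = \psi^{2(n-1)} = 0$ (since $2(n-1) \geq n$), cancels $\pi$ to obtain $h'\pi h' = 0$, and finally derives a contradiction to $h' \neq 0$ by playing off the left-minimality of $\phi$ against the square-zero endomorphism $h'\pi$ of $T_0$, whose kernel is a proper submodule of $T_0$ containing both $\phi(A)$ and $Im(h')$.

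With $T_0 \in \alpha(\Tcal) \subseteq \Tcal_{\alpha(\Tcal)}$ established, the quotient $T_1$ of $T_0$ lies in $gen(\alpha(\Tcal)) \subseteq \Tcal_{\alpha(\Tcal)}$, and closure of $\Tcal_{\alpha(\Tcal)}$ under quotients gives $\Tcal = gen(T_0) \subseteq \Tcal_{\alpha(\Tcal)}$, completing the main equality. The assignment $\Tcal \mapsto \alpha(\Tcal)$ takes values in $f\mbox{-}wide(A)$ by Lemma~\ref{lemma alternative description}, and if $\alpha(\Tcal_1) = \alpha(\Tcal_2)$ for two members of $f\mbox{-}tors(A)$, then $\Tcal_1 = \Tcal_{\alpha(\Tcal_1)} = \Tcal_{\alpha(\Tcal_2)} = \Tcal_2$, giving the injectivity.
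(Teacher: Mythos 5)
Your strategy hinges on the claim that $T_0 \in \alpha(\Tcal)$, equivalently (via Lemma~\ref{lemma alternative description}) that $\Hom_A(T_1,T_0)=0$. This claim is \emph{false} in general, so the argument breaks at step two. Here is a counterexample. Let $A=\Kbb Q/(aba,\,bab)$ where $Q$ is the quiver with two vertices $1,2$ and arrows $a\colon 1\to 2$, $b\colon 2\to 1$; this is a representation finite (Nakayama) algebra, $P_1$ is uniserial with composition factors $S_1,S_2,S_1$ from top to bottom, and in particular $soc(P_1)=S_1$. Take $\Tcal=gen(P_1)=add(P_1\oplus M\oplus S_1)$ with $M=P_1/soc(P_1)$; one checks this is a (functorially finite) torsion class. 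The minimal left $\Tcal$-approximation of $P_2$ is $P_2\to P_1$ with image $rad(P_1)$ and cokernel $S_1$, and the approximation of $P_1$ is the identity, so $T_0=P_1$ and $T_1=S_1$ (basic forms). But $\Hom_A(T_1,T_0)=\Hom_A(S_1,P_1)\cong\Kbb\neq 0$ precisely because $soc(P_1)=S_1$. Indeed in this example $\alpha(\Tcal)=\Tcal\cap S_1^{\circ}=add(M)$, so $T_0\notin\alpha(\Tcal)$ even though $\Tcal_{\alpha(\Tcal)}=filt(gen(M))=\Tcal$ holds, exactly as the proposition asserts.

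Independently of the counterexample, the reduction to nilpotency in your argument does not close: from $h\pi\in rad(\End_A(T_0))$ you obtain $(h\pi)^n=0$, cancel $\pi$ to get $h'\pi h'=0$, and then appeal to ``playing off the left-minimality of $\phi$ against the square-zero endomorphism $h'\pi$.'' But the only consequences you record — that $\ker(h'\pi)$ is a proper submodule containing $\phi(A)$ and $\Img(h')$ — are exactly what one expects of any nonzero square-zero element of the radical annihilating $\phi$, and yield no contradiction; such elements genuinely exist (as the example shows). The paper's proof avoids this obstruction entirely: it fixes an arbitrary nonzero $X\in\Tcal$, passes to a simple right $\End_A(T_0)$-submodule $S=f\cdot\End_A(T_0)$ of $\Hom_A(T_0,X)$, and uses that $T_0/\tr{T_1}{T_0}$ (not $T_0$ itself) lies in $\alpha(\Tcal)$. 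Assuming $\Hom_A(\alpha(\Tcal),X)=0$ forces the trace $\tr{T_1}{T_0}$ to surject onto $\Img(f)$, producing a composite $T_0\xrightarrow{h}T_1\xrightarrow{g}T_0\xrightarrow{f}X$ that is nonzero yet lands in $S\cdot rad(\End_A(T_0))=0$ by Lemma~\ref{lem split-proj} — a contradiction. To repair your argument along roughly similar lines, you would need to replace ``$T_0\in\alpha(\Tcal)$'' by ``$T_0/\tr{T_1}{T_0}\in\alpha(\Tcal)$'' and then still supply an inductive or simple-module argument to show every module of $\Tcal$ receives a nonzero map from $\alpha(\Tcal)$; this is genuinely more work than a direct computation with $T_0$.
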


\begin{proof}
Since, by construction, $\Tcal_{\alpha(\Tcal)}$ is the smallest torsion class containing $\alpha(\Tcal)$, it is enough to check that $\Tcal\subseteq\Tcal_{\alpha(\Tcal)}$. We take a non-zero module $X$ in $\Tcal$. By induction, it suffices to show that $Hom_A(\alpha(\Tcal),X)\not= 0$. Suppose that the opposite holds. Let $\phi\colon A\ra T_0$ be the minimal left $\Tcal$-approximation of the regular module. In what follows, we view $Hom_A(T_0,X)$ as a right $End_A(T_0)$-module. Note that $Hom_A(T_0,X)\not= 0$, since $X$ is in $\Tcal$. Let $S$ be a simple submodule of $Hom_A(T_0,X)$ such that $S=f\cdot End_A(T_0)$ for a morphism $f\colon T_0\ra X$. By $\tr{T_1}{T_0}$ we denote the $A$-module trace of $T_1$ in $T_0$. Using Lemma \ref{lemma alternative description}, it follows that $T_0/\tr{T_1}{T_0}$ lies in $\alpha(\Tcal)$ and, thus, by assumption, all maps from $T_0/\tr{T_1}{T_0}$ to $X$ are zero. Consequently, the $A$-module $\tr{T_1}{T_0}$ cannot be contained in the kernel of $f$. Therefore, there is a map $g\colon T_1\ra T_0$ such that $f\circ g\colon T_1\ra X$ is not zero. Moreover, since $T_0$ generates $\Tcal$, we also obtain a map $h\colon T_0\ra T_1$ such that the composition
$$\xymatrix{T_0\ar[r]^h & T_1\ar[r]^g & T_0\ar[r]^f & X}$$
is not trivial. But, using Lemma \ref{lem split-proj}, the map $g\circ h$ belongs to the radical of $End_A(T_0)$ and, thus, the composition $f\circ g\circ h$ lies in $S\cdot rad(End_A(T_0))=rad(S)=0$. This yields a contradiction.
\end{proof}

We summarise the results above in the following theorem.

\begin{theorem}\label{main}
Let $A$ be a finite dimensional $\mathbb{K}$-algebra. Then $\alpha$ yields a bijective correspondence between
\begin{enumerate}
\item functorially finite torsion classes in $mod(A)$;
\item functorially finite wide subcategories $\Wcal$ in $mod(A)$ for which $\Tcal_\Wcal$ is functorially finite.
\end{enumerate}
\end{theorem}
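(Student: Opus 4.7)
The plan is to assemble the theorem by combining the three previously proved results, namely Proposition~\ref{prop wide}, Lemma~\ref{lemma alternative description}, and Proposition~\ref{prop alpha injective}, which together already contain all the substance. There should be essentially nothing new to prove; the task is just to verify that $\alpha$ lands in the correct class and that both composites are the identity on the indicated sets.

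First I would check that $\alpha$ indeed sends (1) into (2). Given $\Tcal \in f\mbox{-}tors(A)$, Lemma~\ref{lemma alternative description} shows that $\alpha(\Tcal) = \Wcal_\Tcal$ is functorially finite and it is wide by the result of Ingalls--Thomas recalled before Proposition~\ref{prop wide}. Moreover, Proposition~\ref{prop alpha injective} gives $\Tcal_{\alpha(\Tcal)} = \Tcal$, which is functorially finite by assumption. So $\alpha(\Tcal)$ lies in (2).

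Next I would build the inverse map. Sending $\Wcal$ in (2) to $\Tcal_\Wcal$ lands in $f\mbox{-}tors(A)$ by the functorial finiteness assumption built into (2). Proposition~\ref{prop wide} gives $\alpha(\Tcal_\Wcal) = \Wcal$, so this is a right inverse of $\alpha$. For the other composite, Proposition~\ref{prop alpha injective} gives $\Tcal_{\alpha(\Tcal)} = \Tcal$ for every $\Tcal \in f\mbox{-}tors(A)$, which is exactly the statement that $\Wcal \mapsto \Tcal_\Wcal$ is a left inverse.

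There is no real obstacle, since all the work has already been done: the only thing to watch is that the condition ``$\Tcal_\Wcal$ is functorially finite'' in (2) is essential for $\Wcal \mapsto \Tcal_\Wcal$ to be a well-defined inverse (without it one would only get an injection from (2) into $tors(A)$, not into $f\mbox{-}tors(A)$). Given the setup of the theorem this condition is imposed by hand, so the bijection follows immediately.
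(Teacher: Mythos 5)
Your proposal is correct and takes essentially the same approach as the paper: the paper's proof is the one-line observation that the theorem follows from Propositions~\ref{prop wide} and~\ref{prop alpha injective}, and your argument simply spells out the bookkeeping (well-definedness via Lemma~\ref{lemma alternative description}, and the two composites being identities) that the paper leaves implicit.
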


\begin{proof}
This follows from Proposition \ref{prop wide} and Proposition \ref{prop alpha injective}.
\end{proof}

Note that, in general, the assumption above of $\Tcal_\Wcal$ being functorially finite is not automatic (see Lemma \ref{lem torsion-ff} and Example \ref{example Asai}). Nevertheless, it can be dropped in some contexts.

\begin{corollary}\label{cor main}
Let $A$ be a finite dimensional $\mathbb{K}$-algebra with $\lvert f\mbox{-}tors(A) \rvert < \infty$. Then $\alpha$ yields a bijection
$$tors(A)\longrightarrow wide(A).$$
Moreover, all torsion classes and all wide subcategories in $mod(A)$ are functorially finite.
\end{corollary}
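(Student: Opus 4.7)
The plan is to reduce the corollary entirely to Theorem \ref{main} by arguing that the finiteness hypothesis forces $tors(A)=f\text{-}tors(A)$ and $wide(A)=f\text{-}wide(A)$, and simultaneously makes the auxiliary condition ``$\Tcal_\Wcal$ is functorially finite'' automatic.

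The first and crucial input is the fact (due to Demonet--Iyama--Jasso for $\tau$-tilting finite algebras) that if $|f\mbox{-}tors(A)|<\infty$ then every torsion class in $mod(A)$ is already functorially finite, i.e.\ $tors(A)=f\mbox{-}tors(A)$. I would either cite this directly or recall the short argument: under the finiteness hypothesis, every torsion class $\Tcal$ can be written as $gen(T)$ for some $T\in\Tcal$, since the generators produced by the finitely many ff torsion classes exhaust the possible values of $gen(-)$ on subcategories of $mod(A)$.

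Once this is available, the rest is essentially bookkeeping. For any $\Wcal\in wide(A)$, Lemma \ref{lem torsion} gives $\Tcal_\Wcal\in tors(A)=f\mbox{-}tors(A)$; hence by Lemma \ref{lemma alternative description} the wide subcategory $\alpha(\Tcal_\Wcal)$ is functorially finite, and by Proposition \ref{prop wide} this equals $\Wcal$. Therefore $wide(A)=f\mbox{-}wide(A)$, and moreover the side condition in part (2) of Theorem \ref{main} ($\Tcal_\Wcal$ functorially finite) is automatically satisfied for every $\Wcal\in wide(A)$. Plugging these identifications into Theorem \ref{main} yields the desired bijection $\alpha\colon tors(A)\longrightarrow wide(A)$.

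The only real obstacle is the first step: the implication $|f\mbox{-}tors(A)|<\infty\Rightarrow tors(A)=f\mbox{-}tors(A)$. Everything else is a routine combination of Proposition \ref{prop wide}, Lemma \ref{lemma alternative description} and Theorem \ref{main}. If a self-contained argument is preferred over a citation, one can argue directly by observing that the lattice $tors(A)$ is always complete under arbitrary intersections, and a finite sublattice $f\mbox{-}tors(A)$ that is closed under the natural join/meet operations and contains enough generators must coincide with $tors(A)$; this is essentially the content of the standard $\tau$-tilting finite argument.
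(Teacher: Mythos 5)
Your argument is correct and follows the paper's proof essentially verbatim: cite \cite[Theorem 3.8]{DIJ} for $tors(A)=f\mbox{-}tors(A)$, use Lemma~\ref{lem torsion}, Proposition~\ref{prop wide} and Lemma~\ref{lemma alternative description} to conclude $wide(A)=f\mbox{-}wide(A)$ and that the auxiliary condition in Theorem~\ref{main}(2) is automatic, then invoke Theorem~\ref{main}. One caveat: the ``self-contained'' replacements you sketch for the DIJ citation are not actually proofs --- it is neither obvious that every torsion class is of the form $gen(T)$ for finitely generated $T$, nor that $f\mbox{-}tors(A)$ is stable under arbitrary meets --- so the citation should be retained rather than replaced by those sketches.
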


\begin{proof}
Using \cite[Theorem 3.8]{DIJ}, $\lvert f\mbox{-}tors(A)\rvert < \infty$ implies that all torsion classes in $mod(A)$ are functorially finite. Now the statement follows from Theorem \ref{main}. In fact, all wide subcategories of $mod(A)$ are functorially finite, by Proposition \ref{prop wide} and Lemma \ref{lemma alternative description}.
\end{proof}

This corollary, in particular, applies to representation finite $\mathbb{K}$-algebras. Furthermore, examples of representation infinite algebras with only finitely many torsion classes can be found in \cite{A} and \cite{Mi}, namely certain Brauer graph algebras and preprojective algebras of Dynkin type.

In some special cases, similar bijections to the ones above were discussed in the literature before.
If $A$ is a hereditary algebra, it was proven in \cite[Corollary 2.17]{IT} that $\alpha$ yields a bijection between functorially finite torsion classes and functorially finite wide subcategories in $mod(A)$. Here, $\Tcal_\Wcal$ is given by $gen(\Wcal)$. Moreover, it follows from \cite[Proposition 6.2]{M} that $\alpha$ induces a similar bijection for Nakayama algebras with $\Tcal_\Wcal$ being 
$$gen(\Wcal)\star gen(\Wcal):=\{X\in mod(A)\mid\exists\,\, 0\ra M\ra X\ra N\ra 0,\,\, M,N\in gen(\Wcal)\}.$$

We finish this section by discussing necessary and sufficient conditions for the torsion class $\Tcal_\Wcal$ to be functorially finite for $\Wcal\in f\mbox{-}wide(A)$.
Let $gen(\Wcal)^{\star n}$ be the class of modules possessing a $gen(\Wcal)$-filtration of length at most $n$ for $n\in\mathbb{N}$.

\begin{lemma} \label{lem torsion-ff}
The following are equivalent for $\Wcal$ in $f\mbox{-}wide(A)$:
\begin{enumerate}
\item $\Tcal_\Wcal$ is functorially finite.
\item There exists $n \ge 1$ such that $\Tcal_\Wcal = gen(\Wcal)^{\star n}$.
\end{enumerate}
\end{lemma}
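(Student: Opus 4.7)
The plan is to handle the two implications separately. Direction $(1) \Rightarrow (2)$ is a direct consequence of the AIR silting description recalled immediately before the lemma; direction $(2) \Rightarrow (1)$ requires an inductive construction of left $\Tcal_\Wcal$-approximations.

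For $(1) \Rightarrow (2)$, I would use the AIR correspondence: if $\Tcal_\Wcal$ is functorially finite then $\Tcal_\Wcal = gen(T)$ for a silting module $T \in mod(A)$. Since $T$ lies in $\Tcal_\Wcal = filt(gen(\Wcal))$, it admits a $gen(\Wcal)$-filtration of some finite length $n$, so $T \in gen(\Wcal)^{\star n}$. The two elementary closure properties to verify are that $gen(\Wcal)^{\star n}$ is closed under finite direct sums (concatenate filtrations componentwise) and under quotients (project a filtration to the quotient and use that $gen(\Wcal)$ is itself closed under quotients). Combining these with the fact that every object of $gen(T)$ is a quotient of $T^k$ for some $k$ forces $\Tcal_\Wcal = gen(T) \subseteq gen(\Wcal)^{\star n}$; the reverse inclusion is immediate from $gen(\Wcal)^{\star n} \subseteq filt(gen(\Wcal)) = \Tcal_\Wcal$.

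For $(2) \Rightarrow (1)$, since $\Tcal_\Wcal$ is a torsion class its right approximations come for free via torsion subobjects, so only left approximations need to be produced. I would argue by induction on $k \le n$ that every $X \in mod(A)$ admits a left $gen(\Wcal)^{\star k}$-approximation; the case $k = n$ then delivers the conclusion. The base case $k = 1$ uses the functorial finiteness of $\Wcal$: for a module $X$ with a projective presentation $\pi \colon A^m \twoheadrightarrow X$ and a left $\Wcal$-approximation $\alpha \colon A^m \to W$, the pushout $Y := W \cup_{A^m} X$ lies in $gen(\Wcal)$ because $W \to Y$ inherits surjectivity from $\pi$, and the induced map $X \to Y$ is a left $gen(\Wcal)$-approximation: every test map $X \to G$ with $G \in gen(\Wcal)$ is first lifted along a surjection $W' \twoheadrightarrow G$ with $W' \in \Wcal$ using projectivity of $A^m$, then factored through $\alpha$ using the left $\Wcal$-approximation property, and finally descended to $Y$ via the pushout. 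The inductive step takes a left $gen(\Wcal)^{\star k}$-approximation $\phi \colon X \to Y_k$ together with a left $gen(\Wcal)$-approximation of the cokernel $\Coker(\phi)$ and combines them into an extension $0 \to Y_k \to Y_{k+1} \to Z \to 0$ with $Z \in gen(\Wcal)$, yielding a map $X \to Y_{k+1}$ with $Y_{k+1} \in gen(\Wcal)^{\star (k+1)}$.

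The main obstacle is verifying the universal property in the inductive step. Given a test map $\phi' \colon X \to T$ with $T \in gen(\Wcal)^{\star (k+1)}$ decomposed via a short exact sequence $0 \to T_k \to T \to T_1 \to 0$, the plan is to factor the composition $X \to T \to T_1$ through the $gen(\Wcal)$-part of $Y_{k+1}$, then lift it against $T \to T_1$, and finally match with $\phi$ on the $T_k$-part. Controlling the $\Ext^1$-obstruction to this lifting --- by exploiting the bound $n$ on the filtration length together with the functorial finiteness of $\Wcal$, presumably via a Bongartz-type completion that forces the relevant extensions to split --- is where the argument becomes delicate and must be executed carefully.
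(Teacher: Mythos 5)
Your direction $(1)\Rightarrow(2)$ is correct and essentially the same as the paper's: the paper uses the target $T_0$ of the minimal left $\Tcal_\Wcal$-approximation of $A$ directly instead of invoking the AIR parametrisation, but since $gen(T)=gen(T_0)$ the content is identical --- a single module generating $\Tcal_\Wcal$ lies in $gen(\Wcal)^{\star n}$ for some $n$, and $gen(\Wcal)^{\star n}$ is closed under finite direct sums and quotients, which forces $\Tcal_\Wcal\subseteq gen(\Wcal)^{\star n}$.

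For $(2)\Rightarrow(1)$ the paper is very short: $gen(\Wcal)$ is functorially finite because $\Wcal$ is (Auslander--Smal\o\ \cite[\S4]{AS}), and the $\star$-product of functorially finite subcategories is functorially finite (Gentle--Todorov \cite[Cor.~1.3]{GT}). Your reduction to left approximations is correct (torsion subobjects supply right approximations for free over an Artin algebra), and your base case via the pushout of a left $\Wcal$-approximation of $A^m$ along a projective presentation of $X$ is a valid derivation of the functorial finiteness of $gen(\Wcal)$. But the inductive step is exactly the content of Gentle--Todorov's theorem, and here there is a genuine gap that you yourself flag. As written, the extension $0\to Y_k\to Y_{k+1}\to Z\to 0$ is not well-specified --- you never identify which class in $\Ext^1_A(Z,Y_k)$ is being used, and the natural candidates require a pullback/pushout argument that is not spelled out --- and the verification of the universal property in the inductive step is acknowledged as ``delicate'' and left to a hoped-for Bongartz-type completion. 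That is precisely the non-trivial part of \cite[Cor.~1.3]{GT}. So your route for $(2)\Rightarrow(1)$ is the right one in spirit (it reconstructs the Gentle--Todorov proof), but the key verification is missing; the paper sidesteps this by citing the result.
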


\begin{proof}
(1) $\Rightarrow$ (2). Let $\phi: A \to T_0$ be the minimal left $\Tcal_\Wcal$-approximation of $A$. Then certainly $T_0 \in gen(\Wcal)^{\star n}$ for some $n \ge 1$, since $\Tcal_\Wcal = \bigcup_{n \ge 1} gen(\Wcal)^{\star n}$. Now given any $M \in \Tcal_\Wcal$ and a surjection $\pi: A^m \to M$, then $\pi$ factors through the left $\Tcal_\Wcal$-approximation $\phi^m\colon A^m \to T_0^m$. Thus, we have a surjection $T_0^m \to M$, proving that $M \in gen(\Wcal)^{\star n}$.

(2) $\Rightarrow$ (1). Since $\Wcal$ is functorially finite, so are $gen(\Wcal)$ (see \cite[\S4]{AS}) and $\Tcal_\Wcal = gen(\Wcal)^{\star n}$ (compare, for example, to \cite[Corollary 1.3]{GT}).
\end{proof}

\section{Universal localisations}\label{section localisations}
Classifying functorially finite wide subcategories of $mod(A)$ at the same time translates to classifying certain ring epimorphisms  starting in $A$ (up to equivalence). Let us be a bit more precise. A ring epimorphism is an epimorphism in the category of all (unital) rings. It is well-known that ring epimorphisms describe precisely those ring homomorphisms for which restriction yields a fully faithful functor between the corresponding module categories. We say that two ring epimorphisms $f:A\ra B$ and $g:A\ra C$ are equivalent if there is a (necessarily unique) isomorphism of rings $h:B\ra C$ such that $g=h\circ f$. Moreover, for a given ring epimorphism $A\ra B$, it was proven in \cite[Theorem 4.8]{Sch} that $Tor_1^A(B,B)=0$ if and only if $Ext_A^1(M,N)\cong Ext_B^1(M,N)$ for all $B$-modules $M$ and $N$. The following proposition, essentially due to \cite{GL} (also compare to \cite[Theorem 1.6.1]{I}), is important in our context.

\begin{proposition}\label{bireflective subcat}
Let $A$ be a finite dimensional $\mathbb{K}$-algebra. By assigning to a given ring epimorphism the essential image of its associated restriction functor, one gets a bijection between
\begin{enumerate}
\item equivalence classes of ring epimorphisms $A\ra B$ with $dim_\mathbb{K}(B)<\infty\,$ and $Tor_1^A(B,B)=0$;
\item functorially finite wide subcategories of $mod(A)$.
\end{enumerate}
\end{proposition}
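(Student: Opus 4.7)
The plan is to establish the bijection by constructing both directions explicitly and checking they are mutually inverse.

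\emph{From ring epimorphisms to wide subcategories.} Given a ring epimorphism $f : A \ra B$ with $\dim_\Kbb B < \infty$ and $Tor_1^A(B,B) = 0$, I would show the essential image $\Wcal := f_*(mod(B))$ is a functorially finite wide subcategory of $mod(A)$. The inclusion $\Wcal \subseteq mod(A)$ holds because finitely generated $B$-modules are finite-dimensional over $\Kbb$, hence finitely generated over $A$. Fullness of $f_*$ (equivalent to $f$ being a ring epimorphism) combined with the standard fact that kernels and cokernels of $B$-linear maps, when computed in $mod(A)$, automatically carry a compatible $B$-module structure, yields that $\Wcal$ is abelian. Extension-closure follows from the cited \cite[Theorem 4.8]{Sch}: the hypothesis $Tor_1^A(B,B) = 0$ gives $Ext_A^1(M,N) \cong Ext_B^1(M,N)$ for $M,N \in \Wcal$, so every $A$-extension of objects in $\Wcal$ is actually a $B$-extension and thus lies in $\Wcal$. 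Functorial finiteness is supplied by the adjoints $B \otimes_A -$ and $Hom_A(B,-)$ to $f_*$: both land inside $mod(A)$ thanks to $\dim_\Kbb B < \infty$, and their unit/counit supply left and right $\Wcal$-approximations.

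\emph{From wide subcategories to ring epimorphisms.} Let $\Wcal$ be a functorially finite wide subcategory. Being abelian and covariantly finite, $\Wcal$ admits a reflection functor $L : mod(A) \ra \Wcal$, assembled from minimal left approximations, with universality forced by closure of $\Wcal$ under cokernels. Set $T := L(A)$, $B := End_A(T)^{\mathrm{op}}$, and let $f : A \ra B$ be the ring homomorphism obtained from the composite $A \cong End_A(A)^{\mathrm{op}} \xrightarrow{L} End_A(T)^{\mathrm{op}}$; the condition $\dim_\Kbb B < \infty$ is immediate from $T \in mod(A)$. The main content is to show (i) every $W \in \Wcal$ admits a unique $B$-module structure extending its $A$-action, and (ii) every $A$-linear map between such is automatically $B$-linear. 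These transport the natural $End_A(T)$-action on $T$ along the isomorphism $\eta_W : W \xrightarrow{\sim} L(W)$; functoriality of $L$ together with the availability of both left and right approximations (bireflectivity) make this transport well-defined and natural. Taken together, (i) and (ii) identify the essential image of $f_*$ with $\Wcal$ and show $f_*$ is fully faithful -- i.e.\ $f$ is a ring epimorphism. Once the identification is in place, $Tor_1^A(B,B) = 0$ follows from extension-closure of $\Wcal$ by reversing \cite[Theorem 4.8]{Sch}.

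Mutual inverseness is a short trace. Starting from $\Wcal$, reapplying the forward direction recovers $\Wcal$ by construction. Starting from $f : A \ra B$, the minimal left $\Wcal$-approximation of $A$ in $\Wcal = f_*(mod(B))$ is (up to equivalence) $f$ itself, and $End_A(B)^{\mathrm{op}} \cong End_B(B)^{\mathrm{op}} = B$, recovering the equivalence class of $f$. The main obstacle I anticipate is step (i) above -- producing a coherent $B$-module structure on an arbitrary $W \in \Wcal$. This is the essential content of the Geigle-Lenzing / Gabriel-de la Peña correspondence and is precisely where functorial finiteness enters crucially: one needs both left and right $\Wcal$-approximations in order to control the candidate $B$-action on $W$ from both sides and to ensure its uniqueness.
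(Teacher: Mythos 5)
The paper does not actually prove this proposition — it is stated as ``essentially due to \cite{GL} (also compare to \cite[Theorem~1.6.1]{I})'' — so there is no internal proof to compare against. Your outline reproduces the standard bireflectivity argument from those sources, and the direction from ring epimorphisms to wide subcategories is complete and correct: fullness of $f_*$ gives closure under kernels and cokernels, $Tor_1^A(B,B)=0$ gives extension-closure via \cite[Theorem~4.8]{Sch}, and the adjoints $B\otimes_A-$ and $Hom_A(B,-)$ of $f_*$ (which stay inside $mod(A)$ because $\dim_\Kbb B<\infty$) supply left and right $\Wcal$-approximations.

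In the converse direction your strategy is the right one, but two points need tightening. First, the justification you give for the minimal left $\Wcal$-approximation $g\colon A\to T$ being a genuine reflection (i.e.\ for $Hom_A(g,W)$ being \emph{injective} and not just surjective) is wrong: you invoke closure under cokernels, but what is actually used is closure under \emph{kernels}. The correct argument is: if $h\colon T\to W'$ satisfies $hg=0$, then $g$ factors through $K:=\ker(h)\in\Wcal$; since $g$ is an approximation the induced map $X\to K$ lifts to some $\phi\colon T\to K$ with $i\phi g=g$ (here $i\colon K\hookrightarrow T$), and left-minimality forces $i\phi$ to be an isomorphism, hence $i$ to be surjective, hence $h=0$. (Cokernels cannot enter here because $\operatorname{coker}(g)$ need not lie in $\Wcal$: the image of $g$ is a quotient of $A$, not of an object of $\Wcal$.) Second, the ``transport of $B$-module structure'' step is better phrased through Morita theory than through moving an $End_A(T)$-action along $\eta_W$. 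Once the reflection $L$ exists, $T=L(A)$ is a projective generator of the abelian category $\Wcal$: it is projective because $Hom_A(T,-)\cong Hom_A(A,-)$ is exact on $\Wcal$-short-exact sequences (these being exact in $mod(A)$ by the exactness of the inclusion), and a generator for the same reason; hence $Hom_A(T,-)\colon\Wcal\to mod(B)$ with $B=End_A(T)^{\mathrm{op}}$ is an equivalence, and its composite with the inclusion $\Wcal\hookrightarrow mod(A)$ is restriction along $f\colon A\to B$. This packages (i) and (ii) cleanly and avoids the well-definedness issues you flag as the ``main obstacle.'' With those two adjustments your proposal is a correct reconstruction of the proof the paper delegates to \cite{GL,I}.
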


Using our results obtained in the previous section, we can describe many of these ring epimorphisms more explicitly by using the concept of universal localisation.

\begin{theorem}\cite[Theorem 4.1]{Sch}
Let $\Sigma$ be a set of maps between finitely generated projective $A$-modules. Then there is a ring $A_\Sigma$, the universal localisation of $A$ at $\Sigma$, and a ring homomorphism $f:A\rightarrow A_\Sigma$ such that
\begin{enumerate}
\item $A_\Sigma \otimes_A \sigma$ is an isomorphism for all $\sigma\in\Sigma$;
\item every ring homomorphism $g:A\rightarrow B$ such that $B\otimes_A \sigma$ is an isomorphism for all $\sigma\in\Sigma$ factors in a unique way through $f$, i.e. there is a commutative diagram of the form
\begin{equation}\nonumber
\xymatrix{A\ar[rr]^g\ar[rd]_{f}&&B\\ & A_\Sigma.\ar[ru]_{\exists! \tilde{g}}}
\end{equation}

\end{enumerate}
Moreover, $f\colon A\ra A_\Sigma$ is a ring epimorphism fulfilling $Tor_1^A(A_\Sigma,A_\Sigma)=0$. 
\end{theorem}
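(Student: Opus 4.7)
The plan is to construct $A_\Sigma$ explicitly by generators and relations, verify the universal property directly from the construction, and deduce the ring-epimorphism and $Tor$-vanishing statements from it. For each $\sigma\colon P_\sigma \to Q_\sigma$ in $\Sigma$ I would first replace $\sigma$ by a square matrix $M_\sigma$ over $A$ of some size $n_\sigma$, by choosing projective complements and direct-summing with identities so that inverting $\sigma$ becomes equivalent to inverting $M_\sigma$ as a matrix. I would then form the free $A$-ring on fresh matrix entries $Y_\sigma = (y_{ij}^\sigma)$, $\sigma \in \Sigma$, and quotient by the entries of $M_\sigma Y_\sigma - \mathbf{1}$ and $Y_\sigma M_\sigma - \mathbf{1}$. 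Call the resulting ring $A_\Sigma$ and let $f\colon A \to A_\Sigma$ be the canonical map.

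By construction $A_\Sigma \otimes_A \sigma$ becomes invertible with inverse the image of $Y_\sigma$, giving (1). For (2), any ring homomorphism $g\colon A \to B$ with each $B \otimes_A \sigma$ invertible forces the images of the entries $y_{ij}^\sigma$ to be those of the unique two-sided inverse of $g(M_\sigma)$, which defines the required unique extension $\tilde g\colon A_\Sigma \to B$. The ring-epimorphism statement drops out of this uniqueness: any two ring maps $h_1, h_2\colon A_\Sigma \to R$ that agree on $A$ are both extensions of the common composite $h_i \circ f$, hence coincide.

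The main obstacle is the vanishing $Tor_1^A(A_\Sigma, A_\Sigma) = 0$. Here I would follow Schofield, first reducing to the case $|\Sigma| = 1$ by writing $A_\Sigma$ as a transfinite colimit of one-step localisations and invoking the compatibility of $Tor$ with filtered colimits. For a single $\sigma$, the presentation above exhibits the kernel of the multiplication map $A_\Sigma \otimes_A A_\Sigma \to A_\Sigma$ as generated by elements of the form $M_\sigma Y_\sigma - \mathbf{1}$ and $Y_\sigma M_\sigma - \mathbf{1}$, which die after tensoring with $A_\Sigma$ because $\sigma$ has already been formally inverted. This calculation simultaneously reproves the ring-epimorphism property (the multiplication is then an isomorphism) and yields the desired $Tor$-vanishing; making it rigorous relies on Cohn's normal form for elements of universal localisations, which provides tight enough control on the relations to carry out the homological computation.
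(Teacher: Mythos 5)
This statement is cited from Schofield's book and the paper gives no proof of its own, so there is nothing to compare against line by line; I will just assess your argument on its merits.

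Your construction of $A_\Sigma$ by adjoining formal inverses of square matrices (after completing each $\sigma$ to a map between free modules of equal rank by direct-summing with identities on projective complements) is the standard one, and it does yield (1) and (2) directly. The deduction of the ring-epimorphism property from the uniqueness clause in (2) is also correct.

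The gap is in the $Tor_1$ part, and it is a real one. You assert that the presentation of $A_\Sigma$ ``exhibits the kernel of the multiplication map $A_\Sigma \otimes_A A_\Sigma \to A_\Sigma$ as generated by'' the relation elements and that these die after tensoring, and you then claim this ``simultaneously reproves the ring-epimorphism property \dots\ and yields the desired $Tor$-vanishing.'' But these are two distinct homological statements. The vanishing of $\ker(\mu\colon A_\Sigma\otimes_A A_\Sigma\to A_\Sigma)$ is precisely the ring-epimorphism property and nothing more. The group $Tor_1^A(A_\Sigma,A_\Sigma)$ is computed from a projective resolution of $A_\Sigma$ \emph{as an $A$-module}: if $0\to K\to F_0\to A_\Sigma\to 0$ with $F_0$ free, then $Tor_1^A(A_\Sigma,A_\Sigma)=\ker(K\otimes_A A_\Sigma\to F_0\otimes_A A_\Sigma)$. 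Your generators-and-relations presentation is a presentation of $A_\Sigma$ as an $A$-ring, not as an $A$-module, and it says nothing directly about the syzygy module $K$. So the ``simultaneous'' claim is not justified; you have given one argument and declared it proves two different things. Appealing to ``Cohn's normal form'' does not fill this in: Malcolmson's criterion controls equality of elements, not syzygies. The actual route one takes (Schofield, and later Neeman--Ranicki) is rather to show that the class of $\Sigma$-inverting modules is closed under extensions --- a five-lemma argument applied to $Hom_A(\sigma,-)$ on a short exact sequence --- which gives $Ext^1_A(M,N)\cong Ext^1_{A_\Sigma}(M,N)$ for $A_\Sigma$-modules $M,N$, and then invoke the equivalence (Schofield Thm.\ 4.8, quoted in the paper) between that Ext-isomorphism and $Tor_1^A(A_\Sigma,A_\Sigma)=0$. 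That is a genuinely different and necessary idea, and your proposal as written does not contain it.

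A smaller remark: the reduction of $Tor_1$ to the case $\lvert\Sigma\rvert=1$ via ``transfinite colimit'' needs to be phrased as a filtered colimit over finite subsets of $\Sigma$, together with the observation (from the universal property) that localising at a finite set can be done one map at a time; as stated it is plausible but imprecise.
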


Note that, in general, the $\mathbb{K}$-dimension of $A_\Sigma$ can be infinite. If it is finite, we can consider the restriction functor $f_*:mod(A_\Sigma)\ra mod(A)$ whose essential image is given by those $X\in mod(A)$ for which $Hom_A(\sigma,X)$ is an isomorphism for all $\sigma\in\Sigma$.
The philosophy is that most (and conjecturally all) of the ring epimorphisms arising from functorially finite wide subcategories of $mod(A)$ are universal localisations. To prove this, we continue to study in more detail the objects in $f\mbox{-}tors(A)$.

Let $\Tcal=gen(T)$ be a functorially finite torsion class in $mod(A)$ with respect to the silting module $T$ and its projective presentation $\sigma$. Then $\Tcal= \Dcal_\sigma = \{X\in mod(A)\mid Hom_A(\sigma,X)\text{ surjective}\}$ (compare to Definition~\ref{defn silting}). Consider the $\Tcal$-approximation sequence
\begin{equation} \label{eq approx}
\xymatrix@1{A\ar[r]^\phi & T_0\ar[r] & T_1\ar[r] & 0} 
\end{equation}
with $T_0$ and $T_1$ in $add(T)$ and $\phi$ left-minimal. Note that $T':=T_0\oplus T_1$ is a silting module equivalent to $T$, i.e. $add(T)=add(T')$. In fact, if we choose the projective presentation $\sigma'$ of $T'$ to be the direct sum of the minimal projective presentation of $T'$ and the trivial map $Ae\ra 0$ for an idempotent $e\in A$, that is chosen maximal such that $Hom_A(Ae,T)=0$, then also $\Dcal_\sigma = \Dcal_{\sigma'}$. Similar to the situation for tilting modules one also recovers $\Tcal$ from $T_1$.

\begin{lemma}\label{lem recovering}
With the above notation, there is a projective presentation $\sigma_1$ of $T_1$ such that $\Dcal_{\sigma_1}=\Tcal$ (in particular, $T_1$ is a partial silting module with respect to $\sigma_1$ following \cite[Definition 3.10]{AMV}).
\end{lemma}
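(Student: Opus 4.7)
The plan is to build $\sigma_1$ from data we already have at hand. Concretely, I would pick any projective presentation $P_1 \xrightarrow{d_1} P_0 \xrightarrow{d_0} T_0 \to 0$ of $T_0$, lift $\phi\colon A \to T_0$ to $\tilde\phi\colon A \to P_0$ (possible because $A$ is projective and $d_0$ is surjective), and set $\sigma_1 := (d_1, \tilde\phi)\colon P_1 \oplus A \to P_0$. A quick image calculation using $\Img(d_1) = \Ker(d_0)$ identifies $\Coker(\sigma_1)$ with $T_0/\Img(\phi) = T_1$, so $\sigma_1$ is a genuine projective presentation of $T_1$ in the sense of Definition \ref{defn silting}.

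The second step is to identify $\Tcal$ with a class visibly related to $\sigma_1$. Since $\phi$ is a left $\Tcal$-approximation and $\Tcal = gen(T_0)$, it follows that
$$\Tcal = \{X \in mod(A) \mid \phi^*\colon Hom_A(T_0, X) \to Hom_A(A, X) \text{ is surjective}\};$$
the inclusion ``$\subseteq$'' is the defining property of an approximation, while ``$\supseteq$'' follows by factoring a chosen generating set of $X$ through $T_0$ to obtain a surjection $T_0^n \to X$. With this reformulation in hand, $\Dcal_{\sigma_1} = \Tcal$ becomes a matter of unwinding the definition of $\Dcal_{\sigma_1}$.

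For the inclusion $\Tcal \subseteq \Dcal_{\sigma_1}$, given $X \in \Tcal$ and a prescribed pair $(u, v) \in Hom_A(P_1, X) \oplus Hom_A(A, X)$, I would first use Ext-projectivity of $T_0$ in $\Tcal$ (Lemma \ref{lem split-proj}) to lift $u$ to some $g_0 \in Hom_A(P_0, X)$; the ambiguity in $g_0$ is by maps of the form $h\circ d_0$ with $h\colon T_0 \to X$, and these alter $g_0 \tilde\phi$ precisely by the elements $h\phi \in \Img(\phi^*) = Hom_A(A, X)$, so $g_0$ can be corrected to match $v$ as well. The reverse inclusion $\Dcal_{\sigma_1} \subseteq \Tcal$ is the cleaner half: given $v\colon A \to X$, applying surjectivity of $Hom_A(\sigma_1, X)$ to the pair $(0, v)$ produces $g\colon P_0 \to X$ which, since $g d_1 = 0$, factors as $g = h\circ d_0$ and thus exhibits $v = h\phi$ as an element of $\Img(\phi^*)$. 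I expect the main obstacle to be organising the simultaneous lifting in the first inclusion cleanly, but once one identifies $\Ker\bigl(Hom_A(d_1, X)\bigr) = d_0^* Hom_A(T_0, X)$ and exploits the surjectivity of $\phi^*$ on $\Tcal$, the argument closes immediately.
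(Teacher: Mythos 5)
Your construction is essentially the module-level incarnation of the paper's argument: the map $\sigma_1 = (d_1,\tilde\phi)\colon P_1\oplus A\to P_0$ is precisely the cone of the lift $\psi\colon A\to(P_1\to P_0)$ of $\phi$ in the derived category, which is exactly the complex $\sigma'_1$ the paper produces from the triangle $A\xrightarrow{\psi}\sigma'_0\to\sigma'_1\to A[1]$. Your computation that $\Coker(\sigma_1)=T_1$ and the reformulation of $\Tcal$ via surjectivity of $\phi^*$ are both correct, and the inclusion $\Dcal_{\sigma_1}\subseteq\Tcal$ is sound as written.

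The gap is in the phrase ``pick any projective presentation.'' For the inclusion $\Tcal\subseteq\Dcal_{\sigma_1}$ you need to lift $u\colon P_1\to X$ through $d_1$, i.e.\ you need $\Hom_A(d_1,X)$ to be surjective for $X\in\Tcal$. This can genuinely fail for a non-minimal presentation: if $d_1$ has a direct summand of the form $P\to 0$ with $\Hom_A(P,X)\neq 0$, no lift of the corresponding component of $u$ exists, so $\Dcal_{\sigma_1}$ would be strictly smaller than $\Tcal$. You must take the \emph{minimal} projective presentation of $T_0$. Moreover, even then, Ext-projectivity of $T_0$ alone is not immediately the right justification: $\Ext^1_A(T_0,X)=0$ only gives surjectivity of $\Hom_A(P_0,X)\to\Hom_A(\Ker d_0,X)$, whereas you need surjectivity onto $\Hom_A(P_1,X)$, and the two differ by maps that are nonzero on $\Ker(d_1)$. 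For the minimal presentation this is still true, but it uses the Auslander--Smal\o{} characterisation (Ext-projective in $\Tcal$ $\Leftrightarrow$ $\Hom_A(\Tcal,\tau T_0)=0$) together with $X\in\Dcal_{\sigma'_0}\Leftrightarrow\Hom_A(X,\tau T_0)=0$. The cleaner route, and the one the paper takes, is to note that the minimal presentation $\sigma'_0$ of $T_0$ lies in $add(\sigma)$ for the chosen presentation $\sigma$ of the silting module $T$, whence $\Tcal=\Dcal_\sigma\subseteq\Dcal_{\sigma'_0}$ gives the needed surjectivity directly, with no Auslander--Reiten input. With ``any'' replaced by ``minimal'' and the lifting step justified one of these two ways, your argument closes.
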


\begin{proof}
Let $\sigma'_0\in add(\sigma)$ be the minimal projective presentation of the module $T_0$ from~\eqref{eq approx}. We will view $\sigma'_0$ as a two-term complex in the derived category $D(A):=D(mod(A))$. There the approximation map $\phi: A \to T_0$ factors through the cokernel map $\pi: \sigma'_0 \to T_0$. That is, there is a map $\psi: A \to \sigma'_0$ in $D(A)$ such that $\phi = \pi \circ \psi$ and, in particular, $Hom_{D(A)}(\psi,T)$ is surjective.
Now consider the triangle
\begin{equation} \label{eq silting triangle}
\xymatrix@1{A\ar[r]^\psi & \sigma'_0\ar[r] & \sigma'_1\ar[r] & A[1].} 
\end{equation}
Then $\sigma'_1$ is also a two-term complex of finitely generated projective $A$-modules and $coker(\sigma'_1) = T_1$. A~priori, we do not know whether $\sigma'_1$ is a minimal presentation of $T_1$. However, an application of the functor $Hom_{D(A)}(-,T[1])$ yields an exact sequence
\[ \xymatrix@1{Hom_{D(A)}(\sigma'_0[1],T[1]) \ar[r]^{\psi^*} & Hom_{D(A)}(A[1],T[1]) \ar[r] & Hom_{D(A)}(\sigma'_1,T[1]) \ar[r] & Hom_{D(A)}(\sigma'_0,T[1])} \]
and, since $T \in \Dcal_\sigma$ and $Hom_{D(A)}(\psi,T)$ is surjective, it follows that $Hom_A(\sigma'_1,T)$ is surjective.
Thus, any potential summand $Ae' \to 0$ of $\sigma'_1$ is such that $Hom_A(Ae',T)=0$.

Let again $e\in A$ be a maximal idempotent such that $Hom_A(Ae,T)=0$ and let $\sigma_1$ be the map obtained from $\sigma'_1$ by adding $Ae \to 0$. 
Then we have shown that the complex $\sigma_1$ lies in $add(\sigma)$ and hence $\Dcal_{\sigma}\subseteq\Dcal_{\sigma_1}$. For the inclusion $\Dcal_{\sigma_1}\subseteq\Dcal_{\sigma}$, take a module $M$ in $\Dcal_{\sigma_1}$ and apply the functor $Hom_{D(A)}(-,M)$ to the triangle in \eqref{eq silting triangle}. We learn that $Hom_A(\sigma'_0,M)$ is surjective and since $add(\sigma) = add(\sigma'_0 \oplus \sigma_1)$ by construction, necessarily $M \in \Dcal_\sigma$.
\end{proof}

Next, we provide sufficient conditions for a ring epimorphism to be a universal localisation. 

\begin{proposition} \label{prop univ-loc-Tor}
Let $A\ra B$ be a ring epimorphism with $dim_\mathbb{K}(B)<\infty$ and $Tor_1^A(B,B)=0$. By $\Xcal_B$ we denote the essential image of the restriction functor in $mod(A)$. If $\Tcal_{\Xcal_B}=filt(gen(\Xcal_B))$ is functorially finite, then $B$ is the universal localisation of $A$ at a map $\sigma_B$ between finitely generated projective $A$-modules with $Hom_{D(A)}(\sigma_B,\sigma_B[1])=0$.
\end{proposition}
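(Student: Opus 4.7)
The plan is to build $\sigma_B$ directly from the silting data associated with $\Tcal_{\Xcal_B}$, identify $\Xcal_B$ with the category of $\sigma_B$-local modules, and then conclude $A_{\sigma_B}\cong B$ via Proposition \ref{bireflective subcat}.

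Since $\Xcal_B$ is a functorially finite wide subcategory (Proposition \ref{bireflective subcat}) and $\Tcal_{\Xcal_B}$ is functorially finite by hypothesis, the discussion around Definition \ref{defn silting} gives $\Tcal_{\Xcal_B}=gen(T)$ for a silting module $T$ with minimal left approximation sequence $A\xrightarrow{\phi}T_0\to T_1\to 0$. Combining Theorem \ref{main} with Lemma \ref{lemma alternative description} yields $\Xcal_B=\alpha(\Tcal_{\Xcal_B})=\Tcal_{\Xcal_B}\cap T_1^{\circ}$. Apply Lemma \ref{lem recovering} to produce the candidate $\sigma_B:=\sigma_1$, a map between finitely generated projective $A$-modules (obtained by augmenting a projective presentation of $T_1$ with an $Ae\to 0$ summand, where $e\in A$ is the maximal idempotent with $Hom_A(Ae,T)=0$) satisfying $coker(\sigma_B)=T_1$ and $\Dcal_{\sigma_B}=\Tcal_{\Xcal_B}$. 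Applying $Hom_A(-,X)$ to the sequence defining $coker(\sigma_B)$, injectivity of $Hom_A(\sigma_B,X)$ amounts to $Hom_A(T_1,X)=0$ while surjectivity amounts to $X\in\Dcal_{\sigma_B}=\Tcal_{\Xcal_B}$, so $\{X\in mod(A)\mid Hom_A(\sigma_B,X)\text{ iso}\}=\Tcal_{\Xcal_B}\cap T_1^{\circ}=\Xcal_B$.

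Since $B\in\Xcal_B$, this characterisation gives $Hom_A(\sigma_B,B)$, and hence $B\otimes_A\sigma_B$, iso; by the universal property of $A_{\sigma_B}$ we get a unique ring map $\tilde g\colon A_{\sigma_B}\to B$ over $A$. Every finite-dimensional $A_{\sigma_B}$-module $M$ has $M\otimes_A\sigma_B$ iso (since $A_{\sigma_B}\otimes_A\sigma_B$ is iso), hence $M\in\Xcal_B$ by the above; conversely every $X\in\Xcal_B$ is a $B$-module and thus an $A_{\sigma_B}$-module via $\tilde g$. Both ring epimorphisms $A\to A_{\sigma_B}$ and $A\to B$ therefore have the same essential image $\Xcal_B\subseteq mod(A)$, and both have vanishing $Tor_1^A$ (the second by hypothesis, the first by Schofield's theorem). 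Once finite-dimensionality of $A_{\sigma_B}$ is established, Proposition \ref{bireflective subcat} forces $\tilde g$ to be an isomorphism. The silting condition $Hom_{D(A)}(\sigma_B,\sigma_B[1])=0$ follows from $\sigma_B$ being a direct summand of the two-term silting complex $\sigma=\sigma'_0\oplus\sigma_1$ representing the silting module $T$; this vanishing encodes the Ext-projectivity of $T$ in $\Tcal_{\Xcal_B}$ from Lemma \ref{lem split-proj}.

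The main obstacle is establishing finite-dimensionality of $A_{\sigma_B}$: a priori universal localisations can have infinite $\mathbb{K}$-dimension, so the essential-image comparison must be strengthened either by identifying $A_{\sigma_B}$ directly with the bireflective envelope of $A$ inside the finite-dimensional wide subcategory $\Xcal_B$, or by exploiting the silting structure of $\sigma_B$ to control the localisation before invoking Proposition \ref{bireflective subcat}.
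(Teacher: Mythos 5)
Your strategy is the same as the paper's: take the silting module $T$ with $gen(T)=\Tcal_{\Xcal_B}$, extract $\sigma_B:=\sigma_1$ from Lemma~\ref{lem recovering}, identify $\Xcal_B$ with the $\sigma_B$-local modules via Lemma~\ref{lemma alternative description} and Proposition~\ref{prop wide}, and then appeal to Proposition~\ref{bireflective subcat} to conclude $B\cong A_{\{\sigma_B\}}$. The intermediate steps (constructing $\tilde g$, the silting condition via $\sigma_B\in add(\sigma)$) are correct, though the paper leaves these implicit. The only thing the paper says differently about the pre-silting condition $Hom_{D(A)}(\sigma_B,\sigma_B[1])=0$ is that it is baked into Lemma~\ref{lem recovering}, which explicitly records that $T_1$ is partial silting with respect to $\sigma_1$; your justification (summand of a silting complex) is the same fact phrased otherwise.

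However, your proposal is incomplete precisely where you flag the obstacle. You observe that one must establish $\dim_\Kbb A_{\{\sigma_B\}}<\infty$ before Proposition~\ref{bireflective subcat} applies, and you list two candidate strategies (bireflective envelope inside $\Xcal_B$, or using the silting structure of $\sigma_B$), but you do not carry either out. The paper closes exactly this gap by invoking the construction in~\cite{AMV2}: since $\sigma_B$ is the (pre)silting complex attached to the partial silting module $T_1$, one has an explicit algebra isomorphism
\[A_{\{\sigma_B\}}\cong End_A^{op}(T_0\oplus T_1)/\langle e_{T_1}\rangle,\]
where $e_{T_1}$ is the idempotent corresponding to the summand $T_1$. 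This is visibly finite dimensional, which is what licenses the final application of Proposition~\ref{bireflective subcat}. So the gap you identify is genuine and is not filled by a soft argument — it needs the concrete presentation of the localisation as an idempotent quotient of the endomorphism algebra of $T_0\oplus T_1$, which is exactly your second proposed route made precise.
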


\begin{proof}
Since, by assumption, $\Tcal_{\Xcal_B}$ is functorially finite, it is of the form $gen(T)$ for some finite dimensional silting $A$-module $T$. Now consider the $gen(T)$-approximation sequence
$$\xymatrix{A\ar[r]^\phi & T_0\ar[r] & T_1\ar[r] & 0}$$
with $\phi$ left-minimal and $T_0$ and $T_1$ in $add(T)$. By Lemma \ref{lem recovering}, there is a projective presentation $\sigma_1$ of $T_1$ turning $T_1$ into a partial silting module with 
$$gen(T)=\Dcal_{\sigma_1}=\{X\in mod(A)\mid Hom_A(\sigma_1,X) \,\,\text{is surjective}\}.$$ 
We define $\sigma_B:=\sigma_1$. Using Proposition \ref{prop wide} and Lemma \ref{lemma alternative description}, it follows that
$$\Xcal_B=\alpha(gen(T))=gen(T)\cap T_1^\circ=\{X\in mod(A)\mid Hom_A(\sigma_1,X) \,\,\text{is an isomorphism}\}.$$
It remains to check that the universal localisation of $A$ at $\{\sigma_1\}$ is finite dimensional. But this is a direct consequence of the construction in \cite{AMV2}. In fact, there is an isomorphism of algebras 
$$A_{\{\sigma_1\}}\cong End_A^{op}(T_0\oplus T_1)/\langle e_{T_1} \rangle$$ 
where $e_{T_1}$ denotes the idempotent in $End_A^{op}(T_0\oplus T_1)$ associated to the summand $T_1$. 
\end{proof}

In the context of representation finite algebras, we obtain the following theorem that generalises previous work on Nakayama algebras (see \cite[Corollary 6.3]{M}).

\begin{theorem}\label{main 2}
Let $A$ be a finite dimensional and representation finite $\mathbb{K}$-algebra. Then the following sets are in bijection:
\begin{enumerate}
\item isomorphism classes of basic silting modules in $mod(A)$;
\item torsion classes in $mod(A)$;
\item wide subcategories of $mod(A)$;
\item epiclasses of ring epimorphisms $A\ra B$ with $Tor^A_1(B,B)=0$;
\item epiclasses of universal localisations of $A$. 
\end{enumerate} 
In particular, for all universal localisations $A_\Sigma$ of $A$ there is a map $\sigma$ between finitely generated projective $A$-modules with $Hom_{D(A)}(\sigma,\sigma[1])=0$ such that $A_\Sigma=A_{\{\sigma\}}$.
\end{theorem}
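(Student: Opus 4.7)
My plan is to assemble the stated five-fold correspondence by combining the earlier results, since the representation-finite hypothesis makes every technical side condition disappear. First, I would note that $mod(A)$ contains only finitely many indecomposable objects, hence only finitely many subcategories, so in particular $\lvert f\mbox{-}tors(A)\rvert<\infty$. Corollary~\ref{cor main} therefore applies and immediately gives the bijection $\alpha\colon tors(A)\to wide(A)$ between (2) and (3), together with the statement that every torsion class and every wide subcategory is functorially finite. The AIR parametrisation of $f\mbox{-}tors(A)$, recalled in the discussion after Definition~\ref{defn silting}, then identifies (2) with (1).

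For the passage to rings, I would invoke Proposition~\ref{bireflective subcat} to identify (3) with the class of ring epimorphisms $A\ra B$ satisfying $dim_\Kbb(B)<\infty$ and $Tor_1^A(B,B)=0$. For each such $B$ the torsion class $\Tcal_{\Xcal_B}=filt(gen(\Xcal_B))$ is functorially finite by representation finiteness, so Proposition~\ref{prop univ-loc-Tor} produces a map $\sigma_B$ of finitely generated projective $A$-modules with $Hom_{D(A)}(\sigma_B,\sigma_B[1])=0$ realising $B$ as the universal localisation $A_{\{\sigma_B\}}$. This simultaneously yields the ``in particular'' statement and shows that every finite-dimensional member of (4) already lies in (5).

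The main obstacle, as I see it, is closing the loop: showing that every universal localisation $A\ra A_\Sigma$ is itself finite-dimensional and hence hits (4). Schofield's theorem (quoted above) already says $A_\Sigma$ is a ring epimorphism with $Tor_1^A(A_\Sigma,A_\Sigma)=0$, and its bireflective subcategory $\Xcal_\Sigma:=\{X\in mod(A)\mid Hom_A(\sigma,X)\text{ is an iso for all }\sigma\in\Sigma\}$ is a wide subcategory, automatically functorially finite by representation finiteness. Applying Proposition~\ref{prop univ-loc-Tor} to $\Xcal_\Sigma$ produces a finite-dimensional universal localisation $A\ra A_{\{\tau\}}$ whose bireflective subcategory in $mod(A)$ agrees with $\Xcal_\Sigma$. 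Since every module in $\Xcal_\Sigma=\Xcal_{\{\tau\}}$ inverts both the maps in $\Sigma$ and the map $\tau$, I expect the universal properties of $A_\Sigma$ and $A_{\{\tau\}}$ to produce mutually inverse ring homomorphisms between them, identifying the two as equivalent ring epimorphisms. This would place $A_\Sigma$ in (4) and complete the circuit; the delicate point is to pass the agreement of bireflective subcategories within $mod(A)$ to the corresponding universal properties, which a priori involve possibly infinite-dimensional rings.
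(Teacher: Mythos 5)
Your handling of the bijections $(1)\Leftrightarrow(2)\Leftrightarrow(3)$ matches the paper: you invoke the AIR parametrisation and Corollary~\ref{cor main}, exactly as the published argument does. The gap lies in the passage to~$(4)$ and~$(5)$, and you have correctly put your finger on where it is, but you are missing the one fact that closes it cleanly.

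The theorem's item~$(4)$ is stated for \emph{all} ring epimorphisms $A\to B$ with $Tor_1^A(B,B)=0$, with no finiteness hypothesis on $B$, whereas Proposition~\ref{bireflective subcat} only parametrises those with $\dim_\Kbb B<\infty$. Likewise for~$(5)$: a universal localisation $A_\Sigma$ is automatically a ring epimorphism with vanishing $Tor_1$, but a~priori it could be infinite-dimensional, and this is exactly the ``delicate point'' you flag. The paper resolves both issues in one stroke by citing a result of Gabriel and de la Pe\~na~\cite[Corollary 2.3]{GdP}: if $A$ is representation finite, then \emph{every} ring epimorphism $A\to B$ has finite-dimensional codomain. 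With this, $(4)$ coincides verbatim with the set in Proposition~\ref{bireflective subcat}, and every universal localisation automatically lands in~$(4)$, so $(4)\Leftrightarrow(5)$ reduces to the one-directional content of Proposition~\ref{prop univ-loc-Tor} that you already use for the ``in particular'' clause.

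Your proposed substitute --- building a finite-dimensional $A_{\{\tau\}}$ with the same bireflective subcategory as $A_\Sigma$ inside $mod(A)$ and then matching universal properties --- is not obviously salvageable without further input. Equality of the two wide subcategories of $mod(A)$ only controls finite-dimensional modules, while the universal property of $A_\Sigma$ quantifies over arbitrary rings $B$ and requires $B\otimes_A\sigma$ to be invertible; passing from ``$Hom_A(\sigma,-)$ is an isomorphism on the common bireflective subcategory'' to ``$A_{\{\tau\}}\otimes_A\sigma$ is invertible for every $\sigma\in\Sigma$'' needs a separate argument, and even then one must rule out that $A_\Sigma$ is strictly larger. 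Rather than patching this, you should simply invoke the Gabriel--de la Pe\~na theorem, which is the intended ingredient.
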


\begin{proof} 
The bijection $(1)\Leftrightarrow (2)$ is due to \cite[Theorem 2.7]{AIR}. Moreover, the correspondence $(2)\Leftrightarrow (3)$ follows from Corollary \ref{cor main}. The bijection $(3)\Leftrightarrow (4)$ is exactly Proposition \ref{bireflective subcat}. To see this, recall that for representation finite algebras, all ring epimorphisms $A\ra B$ have a finite dimensional codomain (\cite[Corollary 2.3]{GdP}). Finally, the bijection $(4)\Leftrightarrow (5)$ is a consequence of Proposition~\ref{prop univ-loc-Tor}.
\end{proof}

\begin{example}\label{example AIR}
Let us consider the situation from~\cite[Example 6.4]{AIR} and~\cite[Examples 5.5 and 6.6]{M}, i.e.\ the algebra $A = \Kbb Q/\langle\beta\alpha\rangle$ with the quiver
\[ Q: \xymatrix@1{1 \ar[r]^\alpha & 2 \ar[r]^\beta & 3}.\]
Then $T = S_1 \oplus P_1 \oplus P_3$ is a sincere $\tau$-tilting module which is not tilting.

Let us compute the corresponding universal localisation. We simply have $\Tcal = gen(T) = add(T)$ and the minimal $\Tcal$-approximation sequence for $P_2$ is
\[ \xymatrix@1{P_2 \ar[r]^{-\cdot \alpha} & P_1 \ar[r] & S_1 \ar[r] & 0},\]
which is at the same time the minimal projective presentation for $S_1$. Hence, the minimal approximation sequence~\eqref{eq approx} for $A$ is of the form $A \to P_1 \oplus P_1 \oplus P_3 \to S_1 \to 0$. It follows that $P_1,P_3$ are precisely the indecomposable split projective modules in $\Tcal$ while $S_1$ is the only indecomposable Ext-projective module which is not split projective. The corresponding universal localisation $f$ is obtained by inverting the minimal projective presentation of $S_1$, or in other words, we adjoin an inverse $\alpha^{-1}$ to $\alpha$ in $A$. Clearly, $f: A \to A_{\{\alpha\}}$ cannot be injective since $\beta\cdot T = 0$, and indeed one easily checks that $ker(f) = \langle\beta\rangle$. As a ring, $A_{\{\alpha\}} \cong End_A^{op}(P_1 \oplus P_1 \oplus P_3 \oplus S_1)/\langle e_{S_1} \rangle \cong M_2(\Kbb) \times \Kbb$.

Combining \cite[Example 6.4]{AIR} with Theorem~\ref{main 2}, there are precisely 12 universal localisations up to equivalence originating at our $A$. Similar easy computations can be performed for a range of other representation finite algebras whose representation theory is sufficiently well understood; see e.g.~\cite[\S6]{AIR} or~\cite{M}.
\end{example}

The following example was communicated to us by Sota Asai, see~\cite[Example 4.5]{Asai}.
\begin{example}\label{example Asai}
Consider the algebra $A = \Kbb Q/\langle\gamma\alpha\rangle$ with the quiver
\[ Q: \xymatrix@1{1 \ar@<.5ex>[r]^\alpha\ar@<-.5ex>[r]_\beta  & 2 \ar[r]^\gamma & 3}.\]
Note that $A$ is a tilted algebra of type $\widetilde{A}_2$. Since the indecomposable injective $A$-module $I_3$ has no non-trivial endomorphisms, it follows that $\Wcal=add(I_3)$ is a functorially finite wide subcategory of $mod(A)$. It was shown by Asai that the torsion class $\Tcal_\Wcal$ is not functorially finite. Indeed, otherwise also the intersection $\Tcal_\Wcal\cap P_3^\circ=filt(gen(I_3/S_3))$ would be functorially finite in $\Xcal_{A/Ae_3A}\cong mod(A/Ae_3A)$, yielding a contradiction, since $I_3/S_3$ identifies with a simple regular module over the Kronecker algebra $A/Ae_3A$. In particular, it follows that the assignment $\alpha$ from Theorem \ref{main} only yields an injective map $f\mbox{-}tors(A)\rightarrow f\mbox{-}wide(A)$ and not a bijection (see~\cite[Theorem 4.6]{Asai} for when $f\mbox{-}tors(A)\rightarrow f\mbox{-}wide(A)$ is a bijection for other tilted algebras $A$).

Nevertheless, it turns out that the functorially finite wide subcategory $\Wcal$ coincides with the essential image of the restriction functor of the universal localisation of $A$ at the map
$$\sigma:=(\xymatrix@1{P_3 \ar[r]^{-\cdot \gamma} & P_2})\oplus (\xymatrix@1{P_3 \ar[r]^{-\cdot \gamma\beta} & P_1}).$$
Indeed, on the one hand, $I_3$ clearly carries an $A_{\{\sigma\}}$-module structure and, on the other hand, by adjoining inverses $\gamma^{-1}$ and $(\gamma\beta)^{-1}$ to $\gamma$ and $\gamma\beta$ in $A$ we obtain the localisation $A_{\{\sigma\}}$ which is Morita equivalent to $\Kbb$. Hence, we have $\Wcal=\Xcal_{A_{\{\sigma\}}}$. Moreover, it is a straightforward computation to check that $Hom_{D(A)}(\sigma,\sigma[1])=0$. It follows that not all universal localisations of $A$ at such a map $\sigma$ arise from the process described in Proposition \ref{prop univ-loc-Tor} and Example \ref{example AIR}. In other words, different techniques are needed if one hopes to prove a general version of Theorem \ref{main 2} relating silting modules to universal localisations.
\end{example}


\end{document}